\crefname{thm}{Theorem}{Theorems}
\crefname{pro}{Proposition}{Propositions}
\crefname{lem}{Lemma}{Lemmas}
\crefname{rmk}{Remark}{Remarks}
\crefname{cor}{Corollary}{Corollaries}
\crefname{dfn}{Definition}{Definitions}
\crefname{ex}{Example}{Examples}
\crefname{section}{Section}{Sections}
\crefname{subsection}{Subsection}{Subsections}
 \newcommand{\as}{{\rm d}\mathbb{P}\times {\rm d}t-a.e.}
 \newcommand{\h}{\mathcal{H}}
 \newcommand{\R}{\mathbb{R}}
 \newcommand{\E}{\mathbb{E}}
 \newcommand{\F}{\mathcal{F}}
 \newcommand{\s}{\mathcal{S}}
 \newcommand{\M}{\mathcal{M}}
 \newcommand{\dd}{\mathrm{d}}
 \newcommand {\Dis}{\displaystyle}
\newcommand{\ass}{{\rm d}\mathbb{P}-a.s.}
\newcommand{\T}{[0,T]}
\newcommand{\dif}{{\rm d}}
\newcommand{\limn}{\lim\limits_{n\rightarrow\infty}}
\newcommand{\supt}{\sup\limits_{t\in[0,T]}}
\newcommand{\supn}{\sup\limits_{n\geq1}}
\newcommand{\supi}{\sup\limits_{i\geq1}}
\newtheorem{thm}{Theorem}[section]
\newtheorem{pro}[thm]{Proposition}
\newtheorem{rmk}[thm]{Remark}
\newtheorem{dfn}[thm]{Definition}
\newtheorem{ex}[thm]{Example}
\journal{arXiv}
\begin{document}
\begin{frontmatter}
\title{Existence and uniqueness on $L^1$ solutions of multidimensional BSDEs with generators of stochastic one-sided Osgood type\tnoteref{found}}
\tnotetext[found]{This work is partially funded by National Natural Science Foundation of China (No.12171471).
\vspace{0.2cm}}

\author{Yuru Lai, Xinying Li, Shengjun Fan\corref{cor1}\vspace{0.3cm}}
\cortext[cor1]{Corresponding author\vspace{0.2cm}}
\ead{1515388061@qq.com; lixinyingcumt@163.com; shengjunfan@cumt.edu.cn}
\address{\small School of Mathematics, China University of Mining and Technology, Xuzhou 221116, P.R. China\vspace{-0.5cm}}

\begin{abstract}
By imposing an additional integrability condition on the first component of the solution, this paper establishes an existence and uniqueness result for $L^1$ solutions of multidimensional backward stochastic differential equations (BSDEs) with a general terminal time when the generator $g$ satisfies a stochastic one-sided Osgood condition along with a general growth condition in the state variable $y$, and a stochastic Lipschitz condition in the state variable $z$, extending and strengthening Theorems 1 and 2 of Fan [J. Theor. Probab. 31(2018)]. Two general stochastic Gronwall-type and Bihari-type inequalities along with some innovative techniques dealing with stochastic coefficients and weaker integrability conditions play crucial roles in our proofs, and can be useful in further study on the adapted solution of BSDEs. \vspace{0.2cm}
\end{abstract}

\begin{keyword}
Backward stochastic differential equation \sep Stochastic one-sided Osgood condition \sep \\
\hspace*{1.95cm} Stochastic Lipschitz condition \sep $L^1$ solution \sep Existence and uniqueness. \vspace{0.2cm}
\end{keyword}

\end{frontmatter}
\vspace{-0.4cm}

\section{Introduction}
\label{sec:1-Introduction}
\setcounter{equation}{0}

Let $k$ and $d$ be two positive integers, $(\Omega,\F,\mathbb{P})$ be a probability space carrying a $d$-dimensional standard Brownian motion $(B_t)_{t\ge0}$, and $(\F_t)_{t\ge0}$ be the natural $\sigma$-algebra flow generated by $(B_t)_{t\ge0}$. Assume that $0<T\leq +\infty$ is a given time horizon and $\F=\F_T$. All equalities and inequalities between random elements will be understood in $\ass$ sense. We consider multidimensional backward stochastic differential equations (BSDEs) of the following form:
\begin{align}\label{101}
     y_t=\xi+\int_t^Tg(s,y_s,z_s) {\rm d}s-\int_t^Tz_s {\rm d}B_s, \ \ \ t\in[0,T],
\end{align}
where $T$ is called the terminal time, $\xi$ is called the terminal condition which is an $\R^k$-valued random vector of $\F_T$-measurable, the random function $$g(\omega,t,y,z):\Omega\times[0,T]\times\R^k\times\R^{k\times d}\mapsto\R^k$$
is $(\F_t)$-progressively measurable for each $(y,z)\in \R^k\times\R^{k\times d}$ which is called the generator.\vspace{0.2cm}

It is well known that \citet{PardouxPeng1990} first proposed the nonlinear BSDE (\ref{101}) with a finite terminal time and established the existence and uniqueness theorem for the $L^2$ solution when the generator $g$ satisfies a uniform Lipschitz condition with respect to the state variables $(y,z)$, and both $\xi$ and $g(t,0,0)$ satisfy a square-integrable condition. After that, BSDEs have attracted a lot of research interest and become an important mathematical tool in many fields such as partial differential equations, nonlinear mathematical expectation, financial mathematics, stochastic optimal control and stochastic games. See, for example, \cite{Peng1997,El97,Kob00,Jia10,Delb10,Hu11,HuTang2016SPA,FanHuTang2023SCL,FanHuTang2024SCL2} for more details. At the same time, numerous works have been carried out on the existence and uniqueness of the adapted solution of BSDEs under weaker assumptions in order to meet the requirements of various practical applications. For instance, in \cite{ChenWang00,Fan16,ZongHu18,XiaoFan20,LiXuFan2021,LiLaiFan2023,LiFan2023} the authors were devoted to extending the finite terminal time to the general one (i.e., $T$ may be positive infinity); in \cite{Bri03,Fan15,LiuLiFan2020,LiFan2023,FanHuTang2023SCL,FanHuTang2024SCL2,Klimsiak2024ECP} the authors aimed to relax the square-integrable condition on $\xi$ and $g(t,0,0)$; and in \cite{Mao1995,Bri03,Ham03,BriHu06,BriHu08,FanJiang10,FanJiang13,Jia10,
HuTang2016SPA,LiXuFan2021,LiLaiFan2023,LiFan2023,FanHuTang2023JDE} the authors were interested in weakening the uniform Lipschitz hypothesis on the generator $g$.\vspace{0.2cm}

With the $L^2$ solution of BSDE \eqref{101}, \citet{Peng1997} introduced the notion of $g$-expectation, a kind of nonlinear expectations, for square-integrable random variables. Since the conventional expectation is defined in the $L^1$ space, the $L^1$ solution of BSDE \eqref{101} should be investigated in order to introduce the $g$-expectation of only integrable random variables. In this paper we are going to study the existence and uniqueness on the $L^1$ solution of multidimensional BSDE \eqref{101} with a general terminal time under some weaker assumptions on the generator $g$. It should be noted that the $L^p$ solution is more difficult to handle when $p\in [1,+\infty)$ becomes smaller since the smoothness of the function $f(x)=|x|^p,\, x\in\R^k$ becomes weaker. Generally speaking, for the $L^p$ solution, it is the case of $p=2$ that was first investigated, then the case of $p>2$ and the case of $p\in (1,2)$, and finally the case of $p=1$.\vspace{0.2cm}

In the sequel, let us briefly introduce some known results on the $L^p~(p\geq1)$ solution of BSDE (\ref{101}), especially the $L^1$ solution. When the terminal time $T$ is a finite positive constant and the generator $g$ satisfies a monotonicity condition along with a general growth condition in the state variable $y$ and a Lipschitz condition in the state variable $z$, \citet{Bri03} studied the $L^p~(p\geq 1)$ solution of BSDE (\ref{101}), and established the existence and uniqueness of the $L^1$ solution under an additional assumption that $g$ is also of sub-linear growth in $z$. This work was wholely extended in \citet{XiaoFanXu15} to BSDE (\ref{101}) with a general terminal time where those constant coefficients appearing in the conditions of the generator $g$ with respect to the state variables $(y,z)$ are replaced with some time-varying ones satisfying certain integrability conditions. \citet{Fan18} further generalized the result of the $L^1$ solution in \cite{Bri03} by weakening the monotonicity condition of $g$ in $y$ to a one-sided Osgood condition. Recently, by establishing a stochastic Gronwall-type inequality and a stochastic Bihari-type inequality via the martingale representation theorem and the BMO martingale method, the authors in \citet{LiXuFan2021} and \citet{LiFan2023}, supposing that those coefficients appearing in the conditions of $g$ in $(y,z)$ may depend on $t$ and $\omega$ and satisfy certain integrability conditions, obtained the existence and uniqueness of the $L^p~(p>1)$ solution of BSDE (\ref{101}) with a general terminal time $T$ and a generator $g$ satisfying a $p$-order weak stochastic monotonicity condition along with a general growth condition in the state variable $y$ (see assumption (H1a)$_p$ in \cref{sec:4-The proof of main result} and assumption (H2) in \cref{sec:3-The statement of Main result}) and a stochastic Lipschitz condition in the state variable $z$ (see assumption (H3) in \cref{sec:3-The statement of Main result}), extending further the corresponding results obtained in \cite{XiaoFanXu15}. Then, a question is naturally asked: under an additional stochastic sub-linear growth condition of $g$ in $z$ similar to that in \cite{Bri03,Fan18} (see assumption (H4) in \cref{sec:3-The statement of Main result}), can we establish a corresponding result to those in \cite{LiXuFan2021,LiFan2023} for the case of $p=1$? The present paper will give an affirmative answer of this question.
Note that the $p$-order weak stochastic monotonicity condition (H1a)$_p$ is just the stochastic one-sided Osgood condition (see assumption (H1) in \cref{sec:3-The statement of Main result}) when $p=1$. By the way, we also mention that very recently, \citet{FanHuTang2023SCL,FanHuTang2024SCL2} studied the $L^1$ solution of a one-dimensional BSDE when the generator $g$ admits a general iterated-logarithmically sub-linear growth in the state variable $z$, strengthening the corresponding result of \cite{Bri03} in the one-dimensional setting.\vspace{0.2cm}

More specifically, by imposing an additional integrability condition on the first component of the solution, this paper establishes a general existence and uniqueness result for the $L^1$ solution of BSDE (\ref{101}) when the generator $g$ satisfies the aforementioned weaker assumptions (H1)-(H4) (see \cref{th} in \cref{sec:3-The statement of Main result}), extending and strengthening Theorems 1 and 2 in \citet{Fan18} (see \cref{rm3.3} in \cref{sec:3-The statement of Main result}). Some core difficulties arise naturally due to the stochastic coefficients and weaker integrability conditions. To overcome these difficulties, we extend the stochastic Gronwall-type and Bihari-type inequalities obtained in \citet{LiXuFan2021} (see \cref{pro:2.3,pro:2.4} in \cref{sec:2-Preliminaries}), establish a new a priori estimate on the $L^p$ solution of BSDE (\ref{101}) (see \cref{pro:4.7} in \cref{sec:4-The proof of main result}), and develop innovative techniques to apply those stochastic inequalities and a priori estimates.\vspace{0.2cm}

The rest of this paper is organized as follows. \cref{sec:2-Preliminaries} contains some notations, spaces and definitions as well as two stochastic inequalities. In \cref{sec:3-The statement of Main result} we present our main result--\cref{th} and provide two examples as its application. The detailed proof of \cref{th} is provided in \cref{sec:4-The proof of main result}.

\section{Preliminaries}
\label{sec:2-Preliminaries}
\setcounter{equation}{0}

\subsection{Notations and definitions\vspace{0.1cm}}

Let us first introduce some notations to be used. Let $\R_+:=[0,+\infty)$ and $\mathbbm{1}_{A}$ denote the indicator function of a set $A$. For each $x,y\in\R$, let $x\vee y:=\max\{x,y\}, x\wedge y:=\min\{x,y\}$. let $|y|$ and $|z|$ respectively denote the Euclidean norm of vector $y\in \R^k$ and matrix $z\in \R^{k\times d}$, and $\langle x,y \rangle$ denote the usual inner product of $x,y\in\R^k$.
For each $p>0$, by $L^p(\R^k)$ and $L^\infty(\R^k)$ we denote the set of $\F_T$-measurable $\R^k$-valued random vectors $\xi$ such that $\E\left [|\xi|^p\right ]<+\infty$ and $\|\xi\|_\infty:=\sup\{x: \mathbb{P}(|\xi|>x)>0\}<+\infty$, respectively. Furthermore, for $p>0$, by $\mathcal{H}^p(0,T;\R)$ ($\mathcal{H}^p$ for short) we denote the set of $(\F_t)$-progressively measurable $\R$-valued processes $(f_t)_{t\in[0,T]}$ such that
$$\|f\|_{\mathcal{H}^p}:=\left(\E\left[\left(\int_0^T |f_t| \dd t\right)^p\right]\right)^{\frac{1}{p}\wedge1},$$
by $\s^p(0,T;\R^k)$ ($\s^p$ for short) the set of $(\F_t)$-adapted, $\R^k$-valued and continuous processes $(Y_t)_{t\in[0,T]}$ such that
$$ \|Y\|_{\s^p}:=\left(\E\left[\sup\limits_{t\in[0,T]}|Y_t|^p\right]
\right)^{\frac{1}{p}\wedge1}<+\infty,\vspace{0.1cm}$$
and by $\M^p(0,T;\R^{k\times d})$ ($\M^p$ for short) the set of $(\F_t)$-progressively measurable $\R^{k\times d}$-valued processes $(Z_t)_{t\in[0,T]}$ such that
$$\|Z\|_{\M^p}:=\left(\E\left[\left(\int_0^T |Z_t|^2{\rm d}t\right)^{\frac{p}{2}}\right]\right)^{\frac{1}{p}\wedge1}
<+\infty.\vspace{0.1cm}
$$
For $p\geq1$, $\s^p$ and $\M^p$ are both Banach spaces, and for each $p\in (0,1)$, they are both complete metric spaces. We recall that a process $(Y_t)_{t\in[0,T]}$ belongs to class $(D)$, if the family of variables
\{$|Y_{\tau}|:\tau$ is the $(\F_t)$-measurable stopping time valued on $[0,T]$ \} is uniformly integrable. The space of $(\F_t)$-progressively measurable continuous processes $(Y_t)_{t\in[0,T]}$ belonging to class $(D)$ is complete under the norm:
$$\left\|Y\right\|_1:=\supt \E\left[|Y_t|\right].$$
For each $p>0$, by $L^\infty(\Omega;L^p([0,T];\R_+))$ we denote the set of $(\F_t)$-progressively measurable processes $f_t(\omega):\Omega\times[0,T]\mapsto\R_+$ satisfying
$$\bigg\|\int_0^T f_t^p \dd t \bigg\|_\infty < +\infty.$$
Finally, by ${\bf S}$ we denote the set of nondecreasing, concave and derivative functions $\rho(x):\R_+\rightarrow\R_+$ satisfying the following conditions:\vspace{0.1cm}

(i) $\rho(0)=0$, $\rho(u)>0$ for each $u>0$ and $\int_{0^+}\frac{\dd u}{\rho(u)} = +\infty$;\vspace{0.1cm}

(ii) $\rho'(u)$ is bounded on $[c,\bar{c}]$ for any $0<c<\bar{c}<+\infty$.\vspace{0.2cm}

Let us further present the following two definitions related to the adapted solution of BSDE (\ref{101}).\vspace{-0.1cm}

\begin{dfn}
A solution of BSDE (\ref{101}) is a pair of $(\F_t)$-progressively measurable processes $(y_t,z_t)_{t\in[0,T]}$ with values in ${\R}^k\times {\R}^{k\times d}$, such that $\ass$, $\int_0^T|z_t|^2\ {\rm d}t<+\infty$, $\int_0^T|g(t,y_t,z_t)|\ {\rm d}t<+\infty$, and BSDE (\ref{101}) holds for each $t\in[0,T]$.
\end{dfn}

\begin{dfn}{\label{de2}}
Suppose that $(y_t,z_t)_{t\in[0,T]}$ is a solution of BSDE (\ref{101}). If for some $p>1$, $(y_t,z_t)_{t\in[0,T]}\in \s^p\times \M^p$, then it is called an $L^p$ solution of BSDE (\ref{101}). If $(y_t,z_t)_{t\in[0,T]}\in \bigcap_{\beta\in(0,1)}\s^\beta\times \M^\beta$ and $(y_t)_{t\in[0,T]}$ belongs to class (D), then it is called an $L^1$ solution of BSDE (\ref{101}).
\end{dfn}

\subsection{Two stochastic inequalities\vspace{0.1cm}}

In this subsection, we are going to present two stochastic inequalities extending slightly Propositions 3.1 and 3.2 of \citet{LiXuFan2021}, where the processes $\bar{\mu}_\cdot$ and $\tilde{\mu}_\cdot$ satisfy some stronger integrability conditions. They will play crucial roles in the proof of the main result of this paper.

\begin{pro}[Stochastic Gronwall-type inequality]\label{pro:2.3}
Assume that $\eta$ is an $\F_T$-measurable nonnegative random variable satisfying $\E[\eta] < +\infty$, $\beta_\cdot\in L^{\infty}(\Omega ; L^{1}([0, T],\R_+))$, $\bar {f}_\cdot$ and $\bar {l}_\cdot$ are two $(\F_t)$-progressively measurable nonnegative processes belonging to $\mathcal{H}^1(0,T;\R)$, and $\bar {\mu}_\cdot$ is an $(\F_t)$-progressively measurable, continuous and nonnegative process satisfying
\begin{equation}\label{eq:2.1}
\mathbb{E}\left[\int_0^T \beta_s\bar{\mu}_{s} \dd s\right]<+\infty.
\end{equation}
If
$$
\bar{\mu}_{t} \leq \mathbb{E}\left[\eta+\int_{t}^{T}\left(\beta_{s} \bar{\mu}_{s}+\bar{f}_{s}\right) \mathrm{d}s \bigg| \mathcal{F}_{t}\right], \ \ \ \ t\in[0,T],
$$
then
$$
 \bar{\mu}_{t} \leq e^{\left\|\int_{t}^{T} \beta_{s} \mathrm{d}s \right\|_{\infty}} \E\left[\eta+\int_{t}^{T} \bar{f}_{s} \mathrm{d} s \bigg| \mathcal{F}_{t}\right], \ \ \ \ t\in[0,T].
$$
Moreover, if
$$
\mathbb{E}\left[\sup _{s \in[t, T]} \bar{\mu}_{s} + \int_t^T \bar{l}_s \dd s \bigg| \mathcal{F}_{t}\right] \leq \mathbb{E}\left[\eta+\int_{t}^{T}\left(\beta_{s} \bar{\mu}_{s}+\bar{f}_{s}\right) \mathrm{d}s \bigg| \mathcal{F}_{t}\right], \ \ \ \ t\in[0,T],
$$
then
$$
\bar{\mu}_{t} \leq \mathbb{E}\left[\sup _{s \in[t, T]} \bar{\mu}_{s} + \int_t^T\bar{l}_s \dd s \bigg| \mathcal{F}_{t}\right] \leq e^{\left\|\int_{t}^{T} \beta_{s} \mathrm{d}s\right\|_{\infty}} \mathbb{E}\left[\eta+\int_{t}^{T} \bar{f}_{s} \mathrm{d}s \bigg| \mathcal{F}_{t}\right], \ \ \ \ t\in[0,T].
$$
\end{pro}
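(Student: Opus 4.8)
The plan is to prove the first assertion by iterating the hypothesis inequality infinitely often and carefully tracking the combinatorial factors that appear, and then to read off the second assertion from an estimate produced along the way. Throughout, write $c_t:=\|\int_t^T\beta_s\,\dd s\|_\infty$, which is a finite, non-random constant since $\beta_\cdot\in L^{\infty}(\Omega;L^{1}([0,T],\R_+))$, so the exponential factor in the conclusion makes sense; and set $X_s:=\E[\eta+\int_s^T\bar f_u\,\dd u\mid\F_s]$, which is a.s. finite for each $s$ because $\E[\eta]<+\infty$ and $\bar f_\cdot\in\h^1$. With this notation the hypothesis reads $\bar\mu_t\le X_t+R_t^{(1)}$, where $R_t^{(1)}:=\E[\int_t^T\beta_s\bar\mu_s\,\dd s\mid\F_t]$ is a.s. finite by \eqref{eq:2.1}.

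First I would introduce, for $j\ge1$,
$$
R_t^{(j)}:=\E\Big[\int_t^T\beta_s\bar\mu_s\,\frac{1}{(j-1)!}\Big(\int_t^s\beta_r\,\dd r\Big)^{j-1}\dd s\;\Big|\;\F_t\Big],\qquad
G_t^{(j)}:=\E\Big[\int_t^T\beta_sX_s\Big(\int_t^s\beta_r\,\dd r\Big)^{j-1}\dd s\;\Big|\;\F_t\Big],
$$
and establish two facts. (a) A one-step recursion $R_t^{(j)}\le\frac{1}{(j-1)!}G_t^{(j)}+R_t^{(j+1)}$: substitute $\bar\mu_s\le X_s+R_s^{(1)}$ into the integrand of $R_t^{(j)}$, use that $\beta_s$ and $(\int_t^s\beta_r\,\dd r)^{j-1}$ are $\F_s$-measurable to absorb them inside the inner conditional expectation, then apply the tower property and Tonelli's theorem together with the elementary identity $\int_t^u\beta_s(\int_t^s\beta_r\,\dd r)^{j-1}\,\dd s=\frac1j(\int_t^u\beta_r\,\dd r)^{j}$. (b) An a priori bound $G_t^{(j)}\le\frac{c_t^{\,j}}{j}\,X_t$: expand $X_s=\E[\eta+\int_s^T\bar f_u\,\dd u\mid\F_s]$, pull it inside as in (a), split into the $\eta$-part and the $\bar f_\cdot$-part, and use the same identity together with $\int_t^s\beta_r\,\dd r\le c_t$. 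All interchanges are legitimate because the integrands are nonnegative.

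Iterating (a) from $\bar\mu_t\le X_t+R_t^{(1)}$ gives, for every $n\ge1$ and every $t\in\T$,
$$
\bar\mu_t\le X_t+\sum_{j=1}^{n}\frac{1}{(j-1)!}\,G_t^{(j)}+R_t^{(n+1)}\le X_t\sum_{j=0}^{n}\frac{c_t^{\,j}}{j!}+R_t^{(n+1)},
$$
where the second inequality uses (b). Since $R_t^{(n+1)}\le\frac{c_t^{\,n}}{n!}\,\E[\int_t^T\beta_s\bar\mu_s\,\dd s\mid\F_t]\to0$ a.s. as $n\to\infty$ — here \eqref{eq:2.1} is exactly what guarantees that the conditional expectation is a.s. finite — letting $n\to\infty$ yields $\bar\mu_t\le e^{c_t}X_t$, the first assertion. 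The same passage to the limit also records the bound $R_t^{(1)}\le(e^{c_t}-1)X_t$, which I use below.

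For the ``moreover'' part, note that since $\bar\mu_t$ is $\F_t$-measurable and $\bar l_\cdot\ge0$ one has $\bar\mu_t=\E[\bar\mu_t\mid\F_t]\le\E[\sup_{s\in[t,T]}\bar\mu_s+\int_t^T\bar l_s\,\dd s\mid\F_t]$, so the stronger hypothesis implies the one already treated (and continuity of $\bar\mu_\cdot$ is what makes $\sup_{s\in[t,T]}\bar\mu_s$ measurable). Then the stronger hypothesis, the identity $\E[\eta+\int_t^T(\beta_s\bar\mu_s+\bar f_s)\,\dd s\mid\F_t]=X_t+R_t^{(1)}$, and the bound $R_t^{(1)}\le(e^{c_t}-1)X_t$ give
$$
\bar\mu_t\le\E\Big[\sup_{s\in[t,T]}\bar\mu_s+\int_t^T\bar l_s\,\dd s\;\Big|\;\F_t\Big]\le X_t+(e^{c_t}-1)X_t=e^{c_t}X_t,
$$
which is the second assertion. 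The main obstacle is making the infinite iteration rigorous: carrying the factors $1/(j-1)!$ correctly through the repeated Tonelli/tower-property manipulations and through the double-integral reorderings, and — the one genuinely new ingredient compared with the classical deterministic Gronwall lemma — verifying that the remainder $R_t^{(n+1)}$ vanishes, which is precisely the point at which the integrability hypothesis \eqref{eq:2.1} is needed.
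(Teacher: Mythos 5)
Your proof is correct, but it takes a genuinely different route from the paper's. The paper's own proof is a two-line reduction: it sets $\bar{\eta}:=\eta+\int_0^T(\beta_s\bar{\mu}_s+\bar{f}_s)\,\dd s$, observes that \eqref{eq:2.1} together with $\E[\eta]<+\infty$ and $\bar f_\cdot\in\mathcal{H}^1$ gives $\E[\bar\eta]<+\infty$, and then invokes the argument of Proposition 3.1 of Li, Xu and Fan (2021), which (as the introduction of the paper indicates) rests on the martingale representation theorem — essentially viewing $Y_t:=\E[\eta+\int_t^T(\beta_s\bar\mu_s+\bar f_s)\,\dd s\,|\,\F_t]$ as the first component of a linear BSDE and applying an integrating factor. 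Your argument is instead a self-contained Picard-type iteration of the hypothesis, with the factorials tracked via the identity $\int_t^u\beta_s(\int_t^s\beta_r\,\dd r)^{j-1}\dd s=\frac1j(\int_t^u\beta_r\,\dd r)^{j}$; all the conditional-Tonelli and tower-property interchanges you perform are legitimate for nonnegative integrands, the quantities $R_t^{(j)}$ and $G_t^{(j)}$ are a.s.\ finite under the stated hypotheses, and the remainder estimate $R_t^{(n+1)}\le\frac{c_t^n}{n!}\,\E[\int_t^T\beta_s\bar\mu_s\,\dd s\,|\,\F_t]\to0$ uses \eqref{eq:2.1} exactly where the paper uses it to verify $\E[\bar\eta]<+\infty$. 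What your approach buys is elementarity and transparency: no martingale representation, no auxiliary BSDE, and the role of the weakened integrability condition \eqref{eq:2.1} (versus the stronger $\E[\sup_t\bar\mu_t]<+\infty$ of the cited reference) is made completely explicit. What the paper's route buys is brevity and uniformity with the companion Bihari-type inequality and the BSDE estimates elsewhere in the paper, which are proved with the same martingale machinery. Your handling of the ``moreover'' part — deducing it from the recorded bound $R_t^{(1)}\le(e^{c_t}-1)X_t$ — is also correct.
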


\begin{proof}
Set
$$
\bar{\eta}:= \eta+\int_{0}^{T}\left(\beta_{s} \bar{\mu}_{s}+\bar{f}_{s}\right) \mathrm{d}s.
$$
By the assumptions of $\eta$, $\beta_\cdot$, $\bar\mu_\cdot$ and $\bar f_\cdot$, we can conclude that $\mathbb{E}[\bar{\eta}]<+\infty$. The rest of proof runs as that in the proof of Proposition 3.1 in \citet{LiXuFan2021}.
\end{proof}

\begin{pro}[Stochastic Bihari-type inequality]\label{pro:2.4}
Assume that $c>0$, $\beta_\cdot \in L^{\infty}(\Omega ; L^{1}([0, T],\R_+))$, $\rho(\cdot)\in{\bf S}$, and $\widetilde{\mu_\cdot}$ is an $(\F_t)$-progressively measurable, continuous and nonnegative process satisfying
\begin{equation}\label{eq:2.2}
\mathbb{E}\left[\int_0^T\beta_t\tilde{\mu}_{t}\dd t\right]<+\infty.
\end{equation}
If
$$
\tilde{\mu}_{t} \leq c+\mathbb{E}\left[\int_{t}^{T} \beta_{s} \rho\left(\tilde{\mu}_{s}\right) \mathrm{d} s \bigg| \mathcal{F}_{t}\right], \ \ \ \ t\in[0,T],
$$
then
$$
\tilde{\mu}_{t} \leq \Theta^{-1}\left(\Theta(c)+\left\|\int_{t}^{T} \beta_{s} \mathrm{ d} s\right\|_{\infty}\right), \ \ \ \ t\in[0,T],
$$
where
$$
\Theta(x):=\int_{1}^{x} \frac{1}{\rho(u)} \mathrm{d} u, \ \ \ \ x>0\vspace{0.2cm}
$$
is a strictly increasing function valued in $\R$, and $\Theta^{-1}(\cdot)$ is the inverse function of $\Theta(\cdot)$.

Moreover, if $c=0$, then for each $t\in[0,T]$, $\tilde{\mu}_{t} \equiv 0$.
\end{pro}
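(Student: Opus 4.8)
The plan is to reduce the statement to (a mild strengthening of) Proposition~3.2 of \citet{LiXuFan2021}: in that result $\widetilde{\mu}_\cdot$ is required to lie in a more regular space, but the only role of that hypothesis in their proof is to guarantee that $\E\big[\int_0^T\beta_s\rho(\widetilde{\mu}_s)\,\dd s\big]<+\infty$, and I claim this already follows here from \eqref{eq:2.2} together with the shape of $\rho$. Indeed, since $\rho\in{\bf S}$ is nondecreasing and concave with $\rho(0)=0$, one has $\rho(u)\le\rho(1)u$ for $u\ge1$ (concavity plus $\rho(0)=0$) and $\rho(u)\le\rho(1)$ for $u\in[0,1]$, hence $\rho(u)\le\rho(1)(1+u)$ for all $u\ge0$. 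Therefore $\beta_s\rho(\widetilde{\mu}_s)\le\rho(1)\beta_s(1+\widetilde{\mu}_s)$ and
\begin{equation*}
\E\left[\int_0^T\beta_s\rho(\widetilde{\mu}_s)\,\dd s\right]\le\rho(1)\left(\left\|\int_0^T\beta_s\,\dd s\right\|_\infty+\E\left[\int_0^T\beta_s\widetilde{\mu}_s\,\dd s\right]\right)<+\infty
\end{equation*}
by $\beta_\cdot\in L^\infty(\Omega;L^1([0,T],\R_+))$ and \eqref{eq:2.2}.

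Consequently $N_t:=\E\big[\int_0^T\beta_s\rho(\widetilde{\mu}_s)\,\dd s\,\big|\,\F_t\big]$ is a uniformly integrable, continuous martingale (Brownian filtration), and
\begin{equation*}
v_t:=c+N_t-\int_0^t\beta_s\rho(\widetilde{\mu}_s)\,\dd s=c+\E\left[\int_t^T\beta_s\rho(\widetilde{\mu}_s)\,\dd s\,\Big|\,\F_t\right],\qquad t\in[0,T],
\end{equation*}
is a well-defined, nonnegative, continuous supermartingale with $v_T=c$, $v_t\ge c>0$, and $\widetilde{\mu}_t\le v_t$ for every $t$ (the last inequality being exactly the hypothesis). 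This is precisely the starting configuration of the argument in \citet{LiXuFan2021}, so from here their proof of Proposition~3.2 applies verbatim, the weakening of the integrability assumption having been absorbed in the paragraph above.

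For completeness, I recall how that core runs. Because $\rho$ is differentiable with derivative bounded on every compact subinterval of $(0,+\infty)$, the function $\Theta(x)=\int_1^x\dd u/\rho(u)$ is strictly increasing and continuously differentiable on $(0,+\infty)$ with $\Theta'=1/\rho$; one applies It\^o's formula to $\Theta(v_\cdot)$ along the localizing times $\tau_n:=\inf\{t\ge0:\,v_t\ge n\}\wedge T$, on which $v$ stays in a compact subset of $(0,+\infty)$. Using the monotonicity $\rho(\widetilde{\mu}_s)\le\rho(v_s)$ one bounds the finite-variation term by $\int_t^T\beta_s\,\dd s\le\|\int_t^T\beta_s\,\dd s\|_\infty$; taking the conditional expectation given $\F_t$ (so the stochastic-integral term drops) and letting $n\to\infty$ (using the continuity of $v$, $\tau_n\uparrow T$, and the domination provided by $\E[v_0]<+\infty$) yields $\Theta(v_t)\le\Theta(c)+\|\int_t^T\beta_s\,\dd s\|_\infty$, i.e. $v_t\le\Theta^{-1}\big(\Theta(c)+\|\int_t^T\beta_s\,\dd s\|_\infty\big)$; since $\widetilde{\mu}_t\le v_t$, the asserted bound follows. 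The only genuinely new point is the integrability estimate of the first paragraph, which is elementary once the sub-linear growth of a concave $\rho$ with $\rho(0)=0$ is noticed; the remaining, more delicate bookkeeping — checking that the localized stochastic integrals are true martingales so their $\F_t$-conditional means vanish, and justifying the passage $n\to\infty$ — is taken over unchanged from \cite{LiXuFan2021}, and is what I expect to be the main work.

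For the case $c=0$: since then $\widetilde{\mu}_t\le\varepsilon+\E\big[\int_t^T\beta_s\rho(\widetilde{\mu}_s)\,\dd s\,\big|\,\F_t\big]$ for every $\varepsilon>0$, the estimate just established gives $\widetilde{\mu}_t\le\Theta^{-1}\big(\Theta(\varepsilon)+\|\int_t^T\beta_s\,\dd s\|_\infty\big)$ for every $\varepsilon>0$. As $\varepsilon\downarrow0$, condition~(i) in the definition of ${\bf S}$ forces $\Theta(\varepsilon)=-\int_\varepsilon^1\dd u/\rho(u)\to-\infty$, while $\Theta$ is a strictly increasing bijection of $(0,+\infty)$ onto $(-\infty,\Theta(+\infty))$; hence $\Theta^{-1}\big(\Theta(\varepsilon)+\|\int_t^T\beta_s\,\dd s\|_\infty\big)\to0$, so $\widetilde{\mu}_t\le0$ and therefore $\widetilde{\mu}_t\equiv0$ by nonnegativity.
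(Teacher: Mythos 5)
Your reduction is not the one the paper uses, and your reconstruction of the ``core'' of Proposition~3.2 of \cite{LiXuFan2021} has a real problem. The paper's own proof is a cleaner reduction: it first applies the linear bound $\rho(x)\le A(1+x)$ together with the already-generalized Gronwall inequality (\cref{pro:2.3}) to get the \emph{uniform deterministic} bound $0\le\tilde{\mu}_t\le e^{A\|\int_0^T\beta_s\dd s\|_\infty}\bigl(c+A\|\int_0^T\beta_s\dd s\|_\infty\bigr)$ for all $t$. This recovers (indeed strengthens) the original hypothesis $\E[\sup_t\tilde{\mu}_t]<+\infty$ of the cited proposition wholesale, so its proof applies without one having to guess which consequences of that hypothesis are actually used; it also keeps the auxiliary process in a compact subinterval of $(0,+\infty)$ where $\rho'$ is bounded (condition (ii) of ${\bf S}$), which is what makes $\Theta$ tractable. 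Your reduction only establishes $\E[\int_0^T\beta_s\rho(\tilde{\mu}_s)\dd s]<+\infty$ (the computation itself is correct) and rests on the unverified assertion that this is the ``only role'' of the stronger hypothesis in \cite{LiXuFan2021} --- precisely the kind of claim a blind argument cannot certify.

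More seriously, the It\^o computation you sketch does not yield the asserted upper bound. Since $\Theta'=1/\rho>0$ and $\Theta''=-\rho'/\rho^2\le 0$, writing $v_t=c+N_t-\int_0^t\beta_s\rho(\tilde{\mu}_s)\dd s$ and expanding gives
\begin{equation*}
\Theta(v_t)=\Theta(c)-\int_t^T\frac{\dd N_s}{\rho(v_s)}+\int_t^T\frac{\beta_s\rho(\tilde{\mu}_s)}{\rho(v_s)}\,\dd s-\frac12\int_t^T\Theta''(v_s)\,\dd\langle N\rangle_s .
\end{equation*}
After conditioning on $\F_t$ the drift term is indeed at most $\|\int_t^T\beta_s\dd s\|_\infty$, but the last term is \emph{nonnegative} (because $\Theta$ is concave) and is controlled by nothing in your argument; dropping it the way you drop the martingale term only produces the useless lower bound $\Theta(v_t)\ge\Theta(c)$. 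Taming this quadratic-variation term is exactly the nontrivial content of the cited proof (the paper's introduction attributes it to the martingale representation theorem and the BMO martingale method), so it cannot be relegated to ``bookkeeping taken over unchanged.'' Your handling of the case $c=0$ is fine.
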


\begin{proof}
Since $\rho(x)\leq A(1+x)$ for some constant $A>0$ (see \eqref{eq:4.1} in \cref{sec:4-The proof of main result} for details), by \cref{pro:2.3} we deduce that
$$
0\leq \tilde{\mu}_t\leq e^{A\|\int_0^T\beta_s\dd s\|_\infty}\left(c+A\left\|\int_0^T\beta_s\dd s \right\|_{\infty}\right), \ \ \ \ t\in[0,T].
$$
Thus, in light of the assumptions of $\rho(\cdot)$, the rest of proof runs as that in the proof of Proposition 3.2 in \citet{LiXuFan2021}.\vspace{0.2cm}
\end{proof}

\begin{rmk}
We note that the processes $\bar{\mu}_\cdot$ and $\tilde{\mu}_\cdot$ in Propositions 3.1 and 3.2 of \citet{LiXuFan2021} need to satisfy, respectively,
\begin{equation}\label{eq:2.3}
\E\left[\sup_{t\in\T}\bar\mu_t\right]<+\infty\ \ \ {\rm and}\ \ \ \E\left[\sup_{t\in\T}\tilde\mu_t\right]<+\infty.
\end{equation}
It is obvious that these two conditions are strictly stronger than \eqref{eq:2.1} and \eqref{eq:2.2}, respectively, due to the fact of $\beta_\cdot\in L^{\infty}(\Omega ; L^{1}([0, T],\R_+))$. We point out that
due to the $L^1$ solution being studied, the conditions in \eqref{eq:2.3} are invalid for our desired assertions, but \eqref{eq:2.1} and \eqref{eq:2.2} are valid. In addition, it seems to be true that \eqref{eq:2.1} and \eqref{eq:2.2} are the most elementary conditions ensuring that \cref{pro:2.3,pro:2.4} hold.
\end{rmk}

\section{Main result}
\label{sec:3-The statement of Main result}
\setcounter{equation}{0}

\subsection{Existence and uniqueness\vspace{0.1cm}}

We first introduce the following assumptions on the generator $g$, where $0 < T \leq +\infty$.

\begin{enumerate}
\renewcommand{\theenumi}{(H1)}
\renewcommand{\labelenumi}{\theenumi}
\item  $g$ satisfies a stochastic one-sided Osgood condition in $y$, i.e., there exists a process $u_\cdot\in L^\infty(\Omega;L^1([0,T];$ \\ $\R_+))$ and a function $\rho(\cdot)\in{\bf S}$ such that $\as$, for each $y_1,y_2\in \R^k,z\in\R^{k\times d}$,
    \begin{equation*}
        \left\langle {y_1-y_2\over |y_1-y_2|}\mathbbm{1}_{|y_1-y_2|\neq 0},\ \ g(\omega,t,y_1,z)-g(\omega,t,y_2,z)\right\rangle\leq u_{t}(\omega) \rho(|y_1-y_2|).
    \end{equation*}
\end{enumerate}
\begin{enumerate}
\renewcommand{\theenumi}{(H2)}
\renewcommand{\labelenumi}{\theenumi}
\item  $g$ has a general growth in $y$, i.e., for each $r\in\R_+$, it holds that
    \begin{equation*}
        \E\left[\int_0^T \psi_r(\omega,t)\ {\rm d}t\right]<+\infty
    \end{equation*}
with
    \begin{equation*}
    \psi_r(\omega,t):=\sup\limits_{|y|\leq r}|g(\omega,t,y,0)|.
    \end{equation*}
Moreover, $\as$, $g(\omega,t,\cdot,z)$ is continuous for each $z\in \R^{k \times d}$.
\end{enumerate}
\begin{enumerate}
\renewcommand{\theenumi}{(H3)}
\renewcommand{\labelenumi}{\theenumi}
\item  $g$ satisfies a stochastic Lipschitz condition in $z$, i.e., there exists a process $v_\cdot\in L^\infty(\Omega;L^2([0,T];$ \\ $\R_+))$ such that $\as$, for each $y\in \R^k, z_1,z_2\in \R^{k\times d}$,
    \begin{equation*}
        |g(\omega,t,y,z_1)-g(\omega,t,y,z_2)|\leq v_t(\omega)|z_1-z_2|.
    \end{equation*}
\end{enumerate}
\begin{enumerate}
\renewcommand{\theenumi}{(H4)}
\renewcommand{\labelenumi}{\theenumi}
\item (i) $g$ satisfies a stochastic sub-linear growth condition in $z$, i.e., there exist a constant $\alpha \in[0,1)$ and an $(\F_t)$-progressively measurable nonnegative process $(\gamma_t)_{t\in\T}$ satisfying
    \begin{equation}\label{eq:3.1}
         \bigg\|\int_0^T \left(\gamma_t(\omega)+\gamma_t^{\frac{1}{1-\alpha}}(\omega)\right) \dif t \bigg\|_\infty
         <+\infty
    \end{equation}
such that $\as$, for each $y\in\R^k, z\in\R^{k\times d}$,
    \begin{equation*}
         |g(\omega,t,y,z)-g(\omega,t,y,0)|\leq \gamma_t(\omega) (g^1_t(\omega)+g^2_t(\omega)+|y|+|z|)^\alpha,
    \end{equation*}
where $(g^1_t)_{t\in[0,T]}$ and $(g^2_t)_{t\in[0,T]}$ are two $(\F_t)$-progressively measurable and nonnegative processes satisfying $\E\big[\int_0^T |g^1_t|\dd t\big]<+\infty$ and $\E\big[\sup_{t\in [0,T]} |g^2_t|\big]<+\infty$, respectively.\vspace{0.2cm}\\
(ii) When the $\alpha$ defined in (i) takes values on [1/2,1), we further assume that there exists a constant $\overline{p}>1$ such that the function $\rho(\cdot)$ in (H1) satisfies
    \begin{equation}\label{eq:3.2}
    \int_{0^+}\frac{u^{\overline{p}-1}}{\rho^{\overline{p}}(u)}\dif u=+\infty.
    \end{equation}
\end{enumerate}
\begin{enumerate}
\renewcommand{\theenumi}{(H5)}
\renewcommand{\labelenumi}{\theenumi}
\item $\E\left[|\xi|+\int_0^T|g(t,0,0)| \dd t\right]<+\infty.$\vspace{0.2cm}
\end{enumerate}

\begin{rmk}\label{rm3.1}
Since the processes $(u_t)_{t\in[0,T]}, (v_t)_{t\in[0,T]}$ and $(\gamma_t)_{t\in[0,T]}$ in the assumptions (H1), (H3) and (H4) may depend on $t$ and $\omega$, where $0<T\leq+\infty$, then assumption (H1) is strictly weaker than the uniform one-sided Osgood condition of $g$ in $y$ employed in \citet{Fan18} where the $(u_t)_{t\in[0,T]}$ is a constant independent of $t$ and $\omega$, and assumptions (H3) and (H4) are strictly weaker than the uniform Lipschitz condition and the sub-linear growth of $g$ in $z$ used in \citet{Fan18} where $(v_t)_{t\in[0,T]}$ and $(\gamma_t)_{t\in[0,T]}$ are also constant independent of $t$ and $\omega$, and $g^2_\cdot\equiv 0$. Furthermore, the inequality \eqref{eq:3.1} in (i) of (H4) can imply
\begin{equation}\label{eq:3.3}
\bigg\|\int_0^T \gamma_t^{\frac{2}{2-\alpha}}(\omega) \dif t \bigg\|_\infty<+\infty.
\end{equation}
In fact, note that $1<\frac{2}{2-\alpha}<\frac{1}{1-\alpha}$ for $\alpha\in(0,1)$. For each $x\geq0$ and $\alpha\in [0,1)$, we have
$$x^{\frac{2}{2-\alpha}}\leq x+x^{\frac{1}{1-\alpha}},\vspace{-0.1cm}$$
then the desired conclusion \eqref{eq:3.3} follows. Finally, it should be noted that when $\rho(x)=kx$ for some constant $k>0$, the inequality \eqref{eq:3.2} in (ii) of (H4) is trivially satisfied for each $\bar p>1$.
\end{rmk}

The following existence and uniqueness theorem is the main result of this paper.

\begin{thm}\label{th}
Let assumptions (H1)-(H5) be in force. Then BSDE (\ref{101}) admits a unique $L^1$ solution $(y_t,z_t)_{t\in[0,T]}$ satisfying
\begin{equation}\label{eq:3.4}
	\E\left[\int_0^T u_t|y_t| \dd t\right]<+\infty,
\end{equation}
where $u_\cdot$ is defined in assumption (H1).\vspace{0.1cm}
\end{thm}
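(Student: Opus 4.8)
The plan is to follow the classical route for $L^1$ solutions of BSDEs à la \citet{Bri03,Fan18}, but systematically replacing every Gronwall/Bihari step with its stochastic counterpart \cref{pro:2.3,pro:2.4} and keeping careful track of the extra integrability \eqref{eq:3.4}. First I would establish \emph{uniqueness}: given two $L^1$ solutions $(y^1,z^1)$ and $(y^2,z^2)$ both satisfying \eqref{eq:3.4}, set $\hat y=y^1-y^2$, $\hat z=z^1-z^2$. Applying the It\^o--Tanaka formula to $|\hat y_t|$ (or to a smooth approximation $(\varepsilon^2+|\hat y_t|^2)^{1/2}$ and then letting $\varepsilon\downarrow 0$), the one-sided Osgood condition (H1) controls the $y$-difference of the generator by $u_t\,\rho(|\hat y_t|)$, while (H3) gives $v_t|\hat z_t|$ for the $z$-difference; the martingale term is handled after a localization and a conditional-expectation argument that uses $\hat y$ being of class (D). The stochastic Lipschitz term in $z$ is absorbed via a Girsanov-type change of measure or, more elementarily, by the inequality $v_t|\hat z_t|\le \frac12|\hat z_t|^2+\frac12 v_t^2\cdot(\cdots)$ combined with the fact that $v_\cdot\in L^\infty(\Omega;L^2)$, leading to an inequality of the form $\widetilde\mu_t\le \mathbb E[\int_t^T \beta_s\rho(\widetilde\mu_s)\,\dd s\mid\F_t]$ with $\widetilde\mu_t=\sup_{s\in[t,T]}|\hat y_s|$ and $\beta_s=u_s$ (up to constants). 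Condition \eqref{eq:2.2} holds precisely because of \eqref{eq:3.4}, so \cref{pro:2.4} with $c=0$ yields $\hat y\equiv 0$, and then $\hat z\equiv 0$ follows from the BSDE.

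For \emph{existence}, the natural strategy is a double approximation. First, truncate and mollify $g$ in $y$ to obtain generators $g^n$ that satisfy a stochastic Lipschitz (hence stochastic monotonicity) condition in $y$ uniformly enough to invoke the $L^p$ theory of \citet{LiXuFan2021,LiFan2023} for some fixed $p\in(1,2)$ — but since $\xi$ and $g(\cdot,0,0)$ are only $L^1$, I would also need to truncate the terminal condition and the zero-value of the generator to $\xi^n$, $g^n(\cdot,0,0)$ lying in $L^p$. This gives $L^p$ solutions $(y^n,z^n)$. The second and harder stage is to pass to the limit. Here the a priori estimate \cref{pro:4.7} is the workhorse: it should give, uniformly in $n$, a bound on $\|y^n\|_1$, on $\mathbb E[(\int_0^T|z^n_t|^2\dd t)^{\beta/2}]$ for $\beta\in(0,1)$, and crucially on $\mathbb E[\int_0^T u_t|y^n_t|\dd t]$, the last being what makes \eqref{eq:2.2}-type hypotheses available in the limit. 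One then shows $(y^n,z^n)$ is Cauchy in $\bigcap_{\beta<1}\s^\beta\times\M^\beta$: writing the BSDE for the difference of indices $n,m$, the $y$-difference of $g^n$ vs $g^m$ splits into a "monotone" part controlled by $u_t\rho(\cdot)$ via (H1) and a vanishing mollification error, the $z$-difference is stochastic-Lipschitz via (H3), and the sub-linear $z$-growth term from (H4)(i) is estimated using $\gamma$ and the exponent bookkeeping in \cref{rm3.1} (this is exactly where the extra condition \eqref{eq:3.3} and, when $\alpha\in[1/2,1)$, the refined Osgood-type condition \eqref{eq:3.2} enter, to keep the Bihari function integrable after raising to the power $\overline p$). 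Applying the stochastic Bihari inequality \cref{pro:2.4} then forces the $\s^\beta$-distance to zero.

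The limit $(y,z)$ is then shown to solve \eqref{101}: the terminal and linear terms converge by the truncation bounds, and the nonlinear term $\int_t^T g(s,y^n_s,z^n_s)\dd s\to\int_t^T g(s,y_s,z_s)\dd s$ in probability is obtained by combining the continuity of $g$ in $(y,z)$ from (H2)/(H3)/(H4), the convergence $y^n\to y$ uniformly in probability and $z^n\to z$ in measure, and a uniform-integrability argument based on the growth controls $\psi_r$ in (H2) and the $\gamma$-estimates in (H4); here one localizes on the set $\{\sup_t|y^n_t|\le r\}$, whose complement has small probability uniformly in $n$ by the $\|y^n\|_1$ bound and Markov's inequality. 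Finally, $y\in$ class $(D)$ and \eqref{eq:3.4} for the limit follow from Fatou applied to the uniform estimates, and uniqueness (already proved) shows the limit does not depend on the chosen approximating sequence.

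I expect the main obstacle to be the passage to the limit in the nonlinear term together with propagating the weighted integrability \eqref{eq:3.4} uniformly in $n$ — that is, proving \cref{pro:4.7} in a form strong enough that the bound on $\mathbb E[\int_0^T u_t|y^n_t|\dd t]$ is uniform, since without it none of the stochastic Gronwall/Bihari inequalities can be applied in the limit; the delicate exponent juggling among $\alpha$, $\tfrac{2}{2-\alpha}$, $\tfrac{1}{1-\alpha}$ and $\overline p$ when $\alpha\in[1/2,1)$ is the technical heart of making that estimate close.
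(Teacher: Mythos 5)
Your overall architecture (truncation, stochastic Gronwall/Bihari in place of the deterministic versions, and using \eqref{eq:3.4} to secure the integrability hypotheses \eqref{eq:2.1}--\eqref{eq:2.2}) matches the paper's, but two of your key steps would fail as written.

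First, in the uniqueness argument your treatment of the $z$-increment is not viable. After the It\^o--Tanaka expansion of $|\hat y_t|$ (first power) there is no $-\tfrac12|\hat z_t|^2$ term available on the left to absorb the Young-inequality bound $v_t|\hat z_t|\le\tfrac12|\hat z_t|^2+\tfrac12v_t^2(\cdots)$; the second-order Tanaka term carries a factor $1/|\hat y_t|$ and cannot be used this way. A Girsanov linearization also does not apply here: for $k>1$ the $z$-difference of $g$ is $\R^k$-valued and cannot in general be written as $\hat z_t\theta_t$, and even in the scalar case a class-(D) process need not remain of class (D) under the new measure. This is precisely why (H4)(i) is imposed: the paper's uniqueness proof (\cref{pro:4.8}) never cancels the $z$-term at the $L^1$ level; instead it uses the sub-linear growth in $z$ to show that $|\hat y_t|\le\E[\int_t^Tu_s\rho(|\hat y_s|)\,\dd s\,|\,\F_t]+G(t)$ with $G\in\s^{q}$, $q=\beta/\alpha>1$, bootstraps $\hat y$ into $\s^q$ via \cref{pro:2.3}, and only then runs the $L^q$ a priori estimate \cref{pro:4.5} (where the $v_t|\hat z_t|$ term is genuinely absorbable) followed by \cref{pro:2.4} with $c=0$. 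Your sketch omits (H4) from the uniqueness part entirely, and without that bootstrap the Bihari step cannot be reached.

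Second, in the existence argument you have no mechanism to close the estimate in $z$. With a single approximating sequence, the Cauchy estimate for $(y^n-y^m,z^n-z^m)$ produces a term $C\,\E[(\int v_s^2\dd s)^{p/2}(\int|\hat z_s|^2\dd s)^{p/2}|\F_t]$ on the right with no smallness, so the $\M^p$-norm of $\hat z$ cannot be recovered. The paper resolves this with a Picard iteration in the $z$-variable (so the $z$ entering the generator is the previous iterate) combined with a subdivision of $[0,T]$ by stopping times $T_j$ chosen so that $\bar C(\int_{T_{j-1}}^{T_j}v_s^2\dd s)^{p/2}\le\tfrac14$, yielding a contraction that is then iterated backwards over the subintervals. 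Relatedly, the paper does not mollify $g$ in $y$ to obtain a Lipschitz generator (which is delicate to do while preserving (H1)--(H2)); it only truncates $\xi$ and $g(t,0)$ and invokes the existing $L^2$ theory under the weak stochastic-monotonicity condition (H1a)$_2$. Your identification of where \eqref{eq:3.2}, \eqref{eq:3.3} and the exponent $\bar p$ enter is correct in spirit, but the case split $\alpha\in[0,1/2)$ versus $\alpha\in[1/2,1)$ is needed precisely to decide whether the bootstrap lands in $\s^2$ or only in $\s^p$ with $p\in(1,\tfrac1\alpha\wedge\bar p)$, which your proposal does not articulate.
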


\begin{rmk}\label{rm3.3}
If the process $(u_t)_{t\in[0,T]}$ is a deterministic function, then noticing that $(y_t)_{t\in[0,T]}$ belongs to class (D) and $\int_0^T u_t \dd t<+\infty$, by Fubini's Theorem we have
$$\E\left[\int_0^T u_t|y_t|\dd t\right]=\int_0^T u_t\E\left[|y_t|\right] \dd t\leq\int_0^Tu_t\dd t\, \sup\limits_{t\in[0,T]}\E\left[|y_t|\right]<+\infty,$$
which means that the condition (\ref{eq:3.4}) in Theorem \ref{th} can be indeed eliminated. This is exactly the cases in \citet{Fan18}, \citet{XiaoFanXu15} and \citet{Bri03}. Therefore, in light of \cref{rm3.1}, we can draw a conclusion that \cref{th} extends and strengthens Theorems 6.2 and 6.3 of \cite{Bri03}, Theorem 4.1 of \cite{XiaoFanXu15} and Theorems 1 and 2 of \cite{Fan18}.\vspace{0.1cm}
\end{rmk}

\begin{proof}[Outline of the proof of \cref{th}]
For the existence part, the method consists to assume first that the generator $g$ does not depend on the state variable $z$ and to show that a desired $L^1$ solution exists, i.e., there exists a pair of adapted processes $(y_t,z_t)_{t\in\T}$ with appropriate integrability properties such that
$$
y_t=\xi+\int_t^T g(s,y_s) {\rm d}s-\int_t^T z_s {\rm d}B_s, \ \ \ t\in[0,T].
$$
Then to show that the following scheme: $(y^0_t,z^0_t)_{t\in\T}:=(0,0)$ and for $n\geq 1$,
$$
y^n_t=\xi+\int_t^T g(s,y^n_s,z^{n-1}_s) {\rm d}s-\int_t^T z^n_s {\rm d}B_s, \ \ \ t\in[0,T]
$$
provides a sequence $(y^n_t,z^n_t)_{t\in\T}$ which is of Cauchy type in an appropriate space and converges to the desired $L^1$ solution of BSDE (\ref{101}). The idea mainly comes from \citet{Bri03} and \citet{Fan18}.
Some updated a priori estimates on the adapted solutions of BSDEs and the maximum inequality on submartingales along with \cref{pro:2.3,pro:2.4} are combined together to address the stochastic coefficients and weaker integrability conditions. Particularly, verifying \eqref{eq:2.1} and \eqref{eq:2.2} and managing to apply the general stochastic Gronwall-type and Bihari-type inequalities are main difficulties which need us to overcome. For the uniqueness part, the method consists to apply the a priori estimate on the $L^p$ solution of BSDEs as well as the general stochastic Gronwall-type and Bihari-type inequalities. The idea mainly comes from Theorem 1 in \citet{Fan18}. The main difference lies in that the additional condition (\ref{eq:3.4}) is imposed in our proof in order to apply the stochastic Gronwall-type inequality-\cref{pro:2.3}. The detailed proof is deferred to \cref{sec:4-The proof of main result}.
\end{proof}

\subsection{Two examples\vspace{0.1cm}}

In this subsection, we provide two examples which \cref{th} can apply to, but any existing results can not apply to.
\begin{ex}\label{ex1}
Let $k=2$, $\ 0< T\leq +\infty$ and $\bar M>0$. Define the following stopping times:
$$\tau_{1}(\omega):=\inf \left\{t \in[0,T]: \int_{0}^{t}|B_{s}(\omega)| {\rm d}s \geq \bar M \right\} \wedge T$$
and
$$
\tau_{2}(\omega):=\inf \left\{t \in[0,T]: \int_{0}^{t} |B_{s}(\omega)|^2 {\rm d}s \geq \bar M\right\} \wedge T\vspace{0.1cm}
$$
with the convention ${\rm inf} \ \emptyset=+\infty$ and define the following processes:
$$
\bar {u}_t(\omega):=|B_t(\omega)|{\mathbbm{1}}_{t\leq \tau_{1}(\omega)} \ \ {\rm and} \ \ \bar {v}_t(\omega):=|B_t(\omega)|{\mathbbm{1}}_{t\leq \tau_{2}(\omega)}, \ (\omega,t)\in \Omega \times [0,T].
$$
It is obvious that $\bar {u}_\cdot \in{L^{\infty}(\Omega;L^{1}([0,T];\R_{+}))}$ and $\bar {v}_\cdot \in{L^{\infty}(\Omega;L^{2}([0,T];\R_{+}))}$. Let
$$g(\omega,t,y,z):=\bar {u}_t(\omega)\begin{bmatrix}h(y_1)+e^{-y_2}\\h(y_2)-e^{y_1}
\end{bmatrix}+\bar {v}_t(\omega)\begin{bmatrix}{\rm sin}|z|\\{\rm cos}|z|\end{bmatrix}, \ \ (\omega, t, y, z)\in\Omega\times[0,T]\times\R^2\times\R^{2\times d},$$
where
$$h(x):=x|{\rm ln}x|{\mathbbm{1}}_{0< x\leq \delta}+(h'(\delta-)(x-\delta)+h(\delta)){\mathbbm{1}}_{x>\delta}$$
with $\delta>0$ small enough.

It is not hard to verify that assumptions (H1)-(H5) hold
with $u_\cdot:=\bar {u}_\cdot$, $\rho(\cdot):=h(\cdot)$, $v_\cdot=\gamma_\cdot:=\bar {v}_\cdot$, $g_\cdot^1:\equiv 0$, $g_\cdot^2:\equiv 1$ and $\alpha:=0$ for any $\xi\in L^1(\F_T;\R^k)$. It follows from Theorem \ref{th} that BSDE (\ref{101}) admit a unique $L^1$ solution $(y_t,z_t)_{t\in[0,T]}$ satisfying (\ref{eq:3.4}).\vspace{0.2cm}
\end{ex}

\begin{ex}\label{ex2}
Let $\ 0< T\leq +\infty$ and define the processes $\bar {u}_\cdot, \bar {v}_\cdot$ as in \cref{ex1}. Let $y:=(y_1, y_2,..., y_k)$ and $g(t,y,z):=(g_1(t,y,z), g_2(t,y,z),\cdots,g_k(t,y,z))$, where for each $i=1,2,\cdots,k$ and each $(\omega, t, y, z)\in\Omega\times[0,T]\times\R^k\times\R^{k\times d}$,\vspace{-0.2cm}
$$
g_i(\omega,t,y,z):=\bar {u}_t(\omega)(\bar{h}(|y|)+e^{-|B_t(\omega)|y_i})+ \bar{v}_t(\omega)(|z|^2\wedge|z|^{1\over 2})+e^{-t}
$$
and
$$
\bar h(x):=-x|{\rm ln} x|^{1\over p}{\mathbbm{1}}_{0<x\leq \delta}+(\bar h'(\delta-)(x-\delta)+\bar h(\delta)){\mathbbm{1}}_{x>\delta}
$$
with $\delta>0$ small enough and $p>1$.

We can check that assumptions (H1)-(H5) hold with $u_\cdot:=\bar {u}_\cdot$, $\rho(\cdot):=\bar h(\cdot)$, $v_\cdot:=2\bar {v}_\cdot$, $\gamma_\cdot:=\bar {v}_\cdot$, $g_\cdot^1=g_\cdot^2:\equiv 0$, $\alpha:=1/2$ and $\bar p:=p$ for any $\xi\in L^1(\F_T;\R^k)$. It follows from Theorem \ref{th} that BSDE (\ref{101}) admits a unique $L^1$ solution $(y_t,z_t)_{t\in[0,T]}$ satisfying (\ref{eq:3.4}).
\end{ex}

\section{Proof of the main result}
\label{sec:4-The proof of main result}
\setcounter{equation}{0}

Let $p>1$ and $u_\cdot$ be defined in (H1). The following two assumptions on the generator of BSDEs are closely related to assumption (H1). They are respectively slight extensions of assumptions (H1a)$_p$ and (H1b)$_p$ in \citet{Fan15}. Moreover, the assumption (H1a)$_p$ is just assumption (H2)$_p$ of \citet{LiFan2023}, and assumption (H2) of \citet{LiXuFan2021} when $p=2$.

\begin{enumerate}
\renewcommand{\theenumi}{(H1a)$_p$}
\renewcommand{\labelenumi}{\theenumi}
\item  $g$ satisfies a $p$-order weak stochastic-monotonicity condition in $y$, i.e., there exists a process $u_.\in L^\infty(\Omega;L^1([0,T];\R_+))$ and a function $\kappa(\cdot)\in {\bf S}$ such that $\as, \forall y_1,y_2 \in \R^k,z\in\R^{k\times d},$
$$
|y_1-y_2|^{p-1}\left\langle {y_1-y_2\over |y_1-y_2|}\mathbbm{1}_{|y_1-y_2|\neq 0},\ \ g(\omega,t,y_1,z)-g(\omega,t,y_2,z)\right\rangle\leq u_t(\omega)\kappa(|y_1-y_2|^p).
$$
\end{enumerate}

\begin{enumerate}
\renewcommand{\theenumi}{(H1b)$_p$}
\renewcommand{\labelenumi}{\theenumi}
\item  $g$ satisfies a $p$-order stochastic Mao condition in $y$, i.e., there exists a process $u_.\in L^\infty(\Omega;L^1([0,T];\\\R_+))$ and a function $\varrho(\cdot)\in {\bf S}$ such that $\as, \forall y_1,y_2 \in \R^k,z\in\R^{k\times d},$
$$
\left\langle {y_1-y_2\over |y_1-y_2|}\mathbbm{1}_{|y_1-y_2|\neq 0},
    \ \ g(\omega,t,y_1,z)-g(\omega,t,y_2,z)\right\rangle\leq u_t(\omega)\varrho^{{1\over p}}(|y_1-y_2|^p).\vspace{0.2cm}
$$
\end{enumerate}

Similar to Proposition 1 of \citet{Fan15}, we can prove the following assertion. Its proof is omitted here.

\begin{pro}\label{pro:4.1}
For each $p>1$, we have
$$
{\rm (H1b)}_p \Longrightarrow {\rm (H1)}\Longrightarrow {\rm (H1a)}_p,
$$
and ${\rm (H1b)}_1 \Longleftrightarrow {\rm (H1)}\Longleftrightarrow {\rm (H1a)}_1$.
Furthermore, ${\rm (H1b)}_{\bar p} \Longrightarrow {\rm (H1b)}_p$ for each $\bar p>p>1$.
In addition, if the generator $g$ satisfies assumption (H1) with a function $\rho(\cdot)$ satisfying
$$\int_{0^+}\frac{u^{\bar p-1}}{\rho^{\bar p}(u)}\dd u = +\infty\vspace{0.1cm}$$
for some $\bar p>1$, then it must satisfy (H1b)$_{\bar p}$ with some function $\varrho(\cdot)\in {\bf S}$.
\end{pro}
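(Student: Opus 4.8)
The plan is to handle each implication by transforming the relevant ``rate function'' ($\rho$, $\kappa$ or $\varrho$) through the power substitution that links the exponents occurring in the three conditions, and then to verify that the transformed function still lies in ${\bf S}$. Throughout I would rely on two elementary consequences of membership in ${\bf S}$: for $\sigma\in{\bf S}$ with $\sigma(0)=0$, the map $u\mapsto\sigma(u)/u$ is nonincreasing (so $\sigma'(u)\le\sigma(u)/u$ and $\sigma$ has at most linear growth, $\sigma(u)\le A(1+u)$), and $\sigma'$ is itself nonincreasing. The case $p=1$ should be dispatched first: there the factor $|y_1-y_2|^{p-1}$ and all exponents $1/p$ become trivial, so $\mathrm{(H1b)}_1$, $\mathrm{(H1)}$ and $\mathrm{(H1a)}_1$ are literally the same inequality, and the equivalences hold with $\varrho=\rho=\kappa$.

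For $\mathrm{(H1)}\Rightarrow\mathrm{(H1a)}_p$ with $p>1$, I would set $\kappa(v):=v^{1-1/p}\rho(v^{1/p})$, so that $x^{p-1}\rho(x)=\kappa(x^p)$ and $\mathrm{(H1a)}_p$ holds with this $\kappa$. Writing $u=v^{1/p}$ one finds $\kappa'(v)=\tfrac{p-1}{p}\tfrac{\rho(u)}{u}+\tfrac1p\rho'(u)$, a nonnegative combination of two nonincreasing functions of $u$, hence nonincreasing in $v$; so $\kappa$ is concave, and the remaining requirements ($\kappa(0)=0$, positivity, monotonicity, local boundedness of $\kappa'$ away from $0$) are immediate, while the Osgood property follows because the substitution $u=v^{1/p}$ turns $\int_{0^+}dv/\kappa(v)$ into $p\int_{0^+}du/\rho(u)=+\infty$. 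Once $\mathrm{(H1b)}_p\Rightarrow\mathrm{(H1)}$ is established as well, the chain $\mathrm{(H1b)}_p\Rightarrow\mathrm{(H1)}\Rightarrow\mathrm{(H1a)}_p$ is automatic, and together with the $p=1$ case it gives $\mathrm{(H1b)}_1\Longleftrightarrow\mathrm{(H1)}\Longleftrightarrow\mathrm{(H1a)}_1$.

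The remaining implications --- $\mathrm{(H1b)}_p\Rightarrow\mathrm{(H1)}$, $\mathrm{(H1b)}_{\bar p}\Rightarrow\mathrm{(H1b)}_p$ for $\bar p>p>1$, and the last assertion --- all reduce to the same template: given $\sigma\in{\bf S}$ and an exponent $r>0$ with $r\ne1$ (namely $r=p$, $r=\bar p/p$, and $r=1/\bar p$ respectively, with $\sigma=\varrho$ in the first two and $\sigma=\rho$ in the last), one needs a function in ${\bf S}$ dominating $\widetilde\sigma(x):=\sigma(x^r)^{1/r}$. This $\widetilde\sigma$ has every property required of a member of ${\bf S}$ except, possibly, concavity on all of $\R_+$; but it is always \emph{star-shaped}, i.e.\ $\widetilde\sigma(x)/x=(\sigma(x^r)/x^r)^{1/r}$ is nonincreasing, and of at most linear growth. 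I would therefore take the sought function to be the least concave majorant of $\widetilde\sigma$ (smoothed to restore differentiability), and the crux is that the Osgood divergence survives this concavification. First, the change of variables $u=x^r$ gives $\int_{0^+}dx/\widetilde\sigma(x)=\tfrac1r\int_{0^+}u^{1/r-1}\sigma(u)^{-1/r}\,du$; for $r>1$ this equals $+\infty$ because the integrand is $\big(\sigma(u)/u\big)^{1-1/r}/\sigma(u)$ with exponent $1-1/r\in(0,1)$ and $\sigma(u)/u$ bounded below near $0$, whereas for the last assertion (where $r=1/\bar p<1$) this integral is exactly $\bar p\int_{0^+}u^{\bar p-1}\rho^{-\bar p}(u)\,du=+\infty$, which is precisely the hypothesis imposed there. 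Second, for the majorant near $0$ one splits according to $\ell:=\lim_{x\to0^+}\widetilde\sigma(x)/x$: if $\ell<+\infty$ the line $x\mapsto\ell x$ is a concave majorant of $\widetilde\sigma$ near $0$, so the least concave majorant is $\le\ell x$ there and its reciprocal is $\ge(\ell x)^{-1}$, whose integral near $0$ diverges; if $\ell=+\infty$ (equivalently $\sigma'(0^+)=+\infty$), one checks using the star-shapedness and the bound $\sigma'\le\sigma(\cdot)/\cdot$ that $\widetilde\sigma$ is already concave on a neighbourhood of $0$, so no modification is needed there, and away from $0$ the linear growth makes the concave majorant harmless for the Osgood integral. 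For $\mathrm{(H1b)}_{\bar p}\Rightarrow\mathrm{(H1b)}_p$ one runs this with $r=\bar p/p$ directly (it also follows by composing the transformations of the previous paragraphs).

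The step I expect to be the main obstacle is the last one: ensuring that replacing the explicit candidate $\widetilde\sigma$ by its least concave majorant does not destroy $\int_{0^+}du/\sigma(u)=+\infty$. The dichotomy on $\ell=\lim_{x\to0^+}\widetilde\sigma(x)/x$ is the device that resolves it, and the structural properties of ${\bf S}$ (monotonicity of $\sigma(u)/u$ and of $\sigma'$, together with at most linear growth) are exactly what make both branches of that dichotomy work.
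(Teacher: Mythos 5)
Your architecture is the standard one (the paper itself omits the proof and defers to Proposition 1 of Fan (2015), which runs on the same power substitutions), and most of it checks out: the $p=1$ equivalences are indeed trivial; the implication $\mathrm{(H1)}\Rightarrow\mathrm{(H1a)}_p$ via $\kappa(v)=v^{1-1/p}\rho(v^{1/p})$ is complete and correct, since $\kappa'(v)=\frac{p-1}{p}\frac{\rho(u)}{u}+\frac{1}{p}\rho'(u)$ with $u=v^{1/p}$ is a sum of two nonincreasing functions of $u$; and your computation showing $\int_{0^+}\mathrm{d}x/\widetilde\sigma(x)=+\infty$ for the candidate $\widetilde\sigma(x)=\sigma(x^r)^{1/r}$ is right in all three remaining implications. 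The genuine gap is in the $\ell=+\infty$ branch of your concavification dichotomy: it is \emph{false} that star-shapedness plus $\sigma'\le\sigma(\cdot)/\cdot$ forces $\widetilde\sigma$ to be concave near $0$ when $\lim_{x\to0^+}\widetilde\sigma(x)/x=+\infty$. Take $r=2$ and let $\sigma$ be obtained from $\sigma_0(v)=v(1-\ln v)$ by replacing $\sigma_0$ with its chord on intervals $[a_n,b_n]$ with $b_n\downarrow0$; the result is still concave, increasing, Osgood, with $\sigma'(0^+)=+\infty$. On $[a_n,b_n]$ one has $\sigma(v)=\alpha_n+\beta_nv$ with $\alpha_n>0$ (concavity with $\sigma(0)=0$ forces a strictly positive intercept unless $\sigma$ is linear down to $0$), and then $\widetilde\sigma(x)=\sqrt{\alpha_n+\beta_nx^2}$ is \emph{strictly convex} on $[\sqrt{a_n},\sqrt{b_n}]$. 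So $\widetilde\sigma$ fails to be concave on every neighbourhood of $0$, and the assertion that ``no modification is needed there'' — which is the step carrying the whole weight of the case $\ell=+\infty$ — collapses.

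The strategy is nonetheless salvageable, and in fact the dichotomy is unnecessary: for any continuous nondecreasing $f$ with $f(0)=0$ and $f(x)/x$ nonincreasing, the least concave majorant satisfies $\hat f\le 2f$. Indeed, $\hat f(x)$ is the supremum of $\lambda f(x_1)+(1-\lambda)f(x_2)$ over $x_1\le x\le x_2$ with $\lambda x_1+(1-\lambda)x_2=x$; monotonicity gives $f(x_1)\le f(x)$, star-shapedness gives $f(x_2)\le x_2 f(x)/x$, and $(1-\lambda)x_2\le x$, whence $\lambda f(x_1)+(1-\lambda)f(x_2)\le(\lambda+1)f(x)\le 2f(x)$. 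This preserves $\int_{0^+}\mathrm{d}x/\hat f(x)=+\infty$ in one stroke, handles both of your cases uniformly, and (together with the routine smoothing and the running-maximum adjustment needed to keep $\hat f$ nondecreasing) closes the gap. With that replacement your plan goes through.
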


Since the functions $\rho(\cdot)$, $\kappa(\cdot)$ and $\varrho(\cdot)$ defined, respectively, in assumptions (H1), (H1a)$_p$ and (H1b)$_p$ are nondecreasing and concave with value $0$ at $0$, we can suppose that there exists a constant $A>0$ such that for each $x\in\R_+$,
\begin{equation}\label{eq:4.1}
\rho(x)\leq A(x+1),\ \ \ \kappa(x)\leq A(x+1)\ \ \ {\rm and}\ \ \ \varrho(x)\leq A(x+1).
\end{equation}
In addition, for convenience, we always suppose that $(u_t)_{t\in[0,T]}$ and $(v_t)_{t\in[0,T]}$ appearing, respectively, in assumptions (H1) and (H3), satisfy that
\begin{equation}\label{eq:4.2}
	\bigg\|\int_0^T (u_t+v^2_t) \dd t \bigg\|_\infty \leq M\vspace{0.1cm}
\end{equation}
for some constant $M>0$.\vspace{0.1cm}

The rest of the present section contains three subsections. The first two subsections are devoted to the existence of Theorem \ref{th} and the last subsection deals with the uniqueness.\vspace{0.2cm}

\subsection{Proof of the existence part of \cref{th}: the case of $g$ being independent of $z$\vspace{0.2cm}}

Let us further introduce the following assumption (A1) on the generator $g$.

\begin{enumerate}
\renewcommand{\theenumi}{(A1)}
\renewcommand{\labelenumi}{\theenumi}
\item Assume $p>0$ and that $\as$, for each $(y,z)\in\R^k\times\R^{k\times d}$, we have
$$
 \left\langle y,\ g(\omega,t,y,z)\right\rangle \leq  \mu_t(\omega)|y|^2+\lambda_t(\omega)|y||z|+|y|f_t(\omega)+\varphi_t(\omega),
$$
where the processes $\mu_\cdot\in L^{\infty}\left(\Omega; L^{1}\left([0, T]; \mathbb{R}_{+}\right)\right)$, $\lambda_\cdot\in L^{\infty}\left(\Omega;L^{2}
\left([0, T]; \mathbb{R}_{+}\right)\right)$, $f_\cdot\in\mathcal{H}^p(0,T;\R)$ and $\varphi_\cdot\in\mathcal{H}^{p\over 2}(0,T;\R)$. Assume further that
$$
 \bigg\| \int_0^T (\mu_s+\lambda^2_s)\dd s  \bigg\|_\infty \leq M,\vspace{0.1cm}
$$
where the constant $M>0$ is defined in \eqref{eq:4.2}.
\end{enumerate}

The following proposition comes from Lemma 2.5 in \citet{LiFan2023} which establishes an a priori estimate on the adapted solution of BSDEs and will be used later.

\begin{pro}\label{pro:4.2}
Let $p>0$, assumption (A1) hold and $(y_t,z_t)_{t\in[0,T]}$ be a solution of BSDE (\ref{101}) such that $y_\cdot\in \s^p(0,T;\R^k)$. Then, $z_\cdot\in\M^p(0,T;\R^{k\times d})$ and there exists a positive constant $C_p$ depending only on $p$ such that for each $0\leq u \leq t\leq T$,
$$
\begin{aligned}
\mathbb{E}\left[\left(\int_{t}^{T}\left|z_{s}\right|^{2} \mathrm{ d} s\right)^{\frac{p}{2}} \bigg|{\F}_{u}\right] & \leq C_{p}(1+M)^{\frac{p}{2}} \mathbb{E}\left[\sup _{s \in[t, T]}\left|y_{s}\right|^{p} \bigg|{\F}_{u}\right] \\
&\ \ \ +C_{p} \mathbb{E}\left[\left(\int_{t}^{T} f_{s} \mathrm{ d} s\right)^{p} \bigg|{\F}_{u}\right]+C_{p} \mathbb{E}\left[\left(\int_{t}^{T} \varphi_{s} \mathrm{ d} s\right)^{\frac{p}{2}} \bigg|{\F}_{u}\right].
\end{aligned}\vspace{0.2cm}
$$
\end{pro}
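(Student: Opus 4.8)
\textbf{Plan of proof for Proposition~\ref{pro:4.2}.} The statement is a standard a priori $L^p$-estimate on the martingale part $z_\cdot$ of a BSDE under the structural assumption (A1). The natural route is to apply It\^o's formula to a suitable power of $|y_t|$ (or to a stopped version thereof), exploit the sign condition in (A1) to absorb the drift, and then use the Burkholder--Davis--Gundy (BDG) inequality to bound the stochastic integral by $\bigl(\int_t^T|z_s|^2\dd s\bigr)^{p/2}$. Since $p>0$ may be less than $1$ (and even less than $2$), the argument must be organized to avoid taking conditional expectations of quantities that are not yet known to be integrable; the usual device is to localize with stopping times $\tau_n$ and then pass to the limit using Fatou's lemma and uniform integrability coming from $y_\cdot\in\s^p$.

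The concrete steps I would carry out are as follows. First I would fix $0\le u\le t\le T$ and, for a localizing sequence $(\tau_n)$ reducing the local martingale $\int_\cdot z_s\,\dd B_s$, apply It\^o's formula to $|y_s|^2$ on $[t,\tau_n]$ to get
$$
|y_t|^2+\int_t^{\tau_n}|z_s|^2\dd s=|y_{\tau_n}|^2+2\int_t^{\tau_n}\langle y_s,g(s,y_s,z_s)\rangle\dd s-2\int_t^{\tau_n}\langle y_s,z_s\,\dd B_s\rangle .
$$
Then I would insert the bound from (A1), namely $\langle y_s,g(s,y_s,z_s)\rangle\le \mu_s|y_s|^2+\lambda_s|y_s||z_s|+|y_s|f_s+\varphi_s$, and use Young's inequality $2\lambda_s|y_s||z_s|\le \tfrac12|z_s|^2+2\lambda_s^2|y_s|^2$ to absorb half of the $\int|z_s|^2$ term on the left. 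This yields
$$
\tfrac12\int_t^{\tau_n}|z_s|^2\dd s\le |y_{\tau_n}|^2+\int_t^{\tau_n}\bigl(2\mu_s+2\lambda_s^2\bigr)|y_s|^2\dd s+\int_t^{\tau_n}\bigl(2|y_s|f_s+2\varphi_s\bigr)\dd s+2\Bigl|\int_t^{\tau_n}\langle y_s,z_s\,\dd B_s\rangle\Bigr|,
$$
and, using $\bigl\|\int_0^T(\mu_s+\lambda_s^2)\dd s\bigr\|_\infty\le M$ together with $2|y_s|f_s\le \sup_{[t,T]}|y_s|^2+f_s^{\,2}$ (or, more carefully to match the statement, keeping $\int_t^Tf_s\dd s$ under a square and $\int_t^T\varphi_s\dd s$ linearly), everything on the right is controlled by $(1+M)\sup_{[t,T]}|y_s|^2$, $\bigl(\int_t^Tf_s\dd s\bigr)^2$, $\int_t^T\varphi_s\dd s$, and the BDG-dominated martingale term.

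Next, raising both sides to the power $p/2$ (distributing $(a+b)^{p/2}\le c_p(a^{p/2}+b^{p/2})$), taking conditional expectation $\E[\cdot|\F_u]$, and applying the conditional BDG inequality to the term $\E\bigl[\bigl(\int_t^{\tau_n}|y_s|^2|z_s|^2\dd s\bigr)^{p/4}|\F_u\bigr]\le \E\bigl[\sup_{[t,T]}|y_s|^{p/2}\bigl(\int_t^{\tau_n}|z_s|^2\dd s\bigr)^{p/4}|\F_u\bigr]$, I would use Young's inequality once more to split the last expression and absorb a small multiple of $\E\bigl[\bigl(\int_t^{\tau_n}|z_s|^2\dd s\bigr)^{p/2}|\F_u\bigr]$ into the left-hand side. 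This absorption step is the main delicate point, because it simultaneously has to work for all $p>0$ and the coefficient being absorbed must be strictly less than $1$; the standard fix is to first prove the estimate with $\tau_n$ in place of $T$, observe that the resulting constant $C_p$ is uniform in $n$, and only afterwards let $n\to\infty$. Finally, letting $n\to\infty$, Fatou's lemma on the left and dominated/monotone convergence on the right (justified by $y_\cdot\in\s^p$ and $f_\cdot\in\h^p$, $\varphi_\cdot\in\h^{p/2}$) give both $z_\cdot\in\M^p$ and the claimed inequality with a constant $C_p$ depending only on $p$. The single genuine obstacle is the self-improving absorption of the $\int|z|^2$ term under the conditional expectation when $p<2$; everything else is bookkeeping with Young's and BDG inequalities.
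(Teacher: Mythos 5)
Your outline is correct and is essentially the argument behind this result: the paper itself does not reprove Proposition~\ref{pro:4.2} (it is imported from Lemma~2.5 of Li and Fan (2023)), but the proof given here for the analogous Proposition~\ref{pro:4.7} follows exactly your route --- It\^o's formula for $|y_s|^2$ on $[t\wedge\tau_n,\tau_n]$, absorbing $2\lambda_s|y_s||z_s|\le \tfrac12|z_s|^2+2\lambda_s^2|y_s|^2$, raising to the power $p/2$, conditional BDG with the split $d_p\,\sup|y|^{p/2}\bigl(\int|z|^2\bigr)^{p/4}\le \tfrac{d_p^2}{2}\sup|y|^p+\tfrac12\bigl(\int|z|^2\bigr)^{p/2}$, and Fatou plus dominated convergence as $n\to\infty$. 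You also correctly identify the one delicate point (the self-absorption of the $\bigl(\int|z|^2\bigr)^{p/2}$ term under conditional expectation, legitimate only after localization makes that quantity finite), so no gap remains.
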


In this subsection, we will prove the following \cref{pro:4.3} which answers the existence part of \cref{th} in the case of the generator $g$ being independent of the state variable $z$. The idea of the proof mainly comes from Proposition 5 of \citet{Fan18}. Furthermore, inspired by \citet{LiXuFan2021} and \citet{LiFan2023}, we need to apply \cref{pro:2.3,pro:2.4} in order to address the stochastic one-sided Osgood condition of the generator $g$ in the state variable $y$. However, when utilizing \cref{pro:2.4}, verification of the condition \eqref{eq:2.2} is a challenge which needs us to overcome in the subsequent proof. In addition, in our proof the way utilizing \cref{pro:2.4} and the maximum inequality on submartingales to verify that the constructed sequence of processes converges uniformly in probability and is a Cauchy sequence under the norm $\|\cdot\|_{1}$ seems also to be innovative.

\begin{pro}\label{pro:4.3}
Assume that the generator $g$ is independent of the state variable $z$ and assumptions (H1), (H2) and (H5) hold. Then, BSDE (\ref{101}) admits an $L^1$ solution $(y_t,z_t)_{t\in[0,T]}$ satisfying (\ref{eq:3.4}).
\end{pro}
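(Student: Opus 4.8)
The plan is to obtain the solution as a limit of solutions of approximating BSDEs, along the lines of Proposition 5 in \citet{Fan18} but with every deterministic Gronwall/Bihari estimate replaced by \cref{pro:2.3,pro:2.4}. For $n\ge 1$ I would take $\xi_n:=\xi\mathbbm{1}_{|\xi|\le n}$ and $g_n(t,y):=g(t,y)-g(t,0)+\theta_n(t)$, where $\theta_n(t)$ denotes the truncation of $g(t,0)$ at level $n$. Since $g_n(t,y_1)-g_n(t,y_2)=g(t,y_1)-g(t,y_2)$, assumption (H1) is inherited by $g_n$ verbatim (same $\rho(\cdot)$ and $u_\cdot$), (H2) passes to $g_n$, while now $\xi_n\in L^\infty(\R^k)$ and $g_n(\cdot,0)=\theta_n(\cdot)\in L^\infty(\Omega;L^\infty([0,T];\R^k))$. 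As (H1) implies (H1a)$_2$ by \cref{pro:4.1} and the conditions in $z$ are vacuous, the $L^2$ theory (the case $p=2$ of \citet{LiXuFan2021}, \citet{LiFan2023}) yields, for each $n$, a solution $(y^n,z^n)\in\s^2\times\M^2$ of \eqref{101} with data $(\xi_n,g_n)$.

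Next I would establish uniform estimates. From (H1) and \eqref{eq:4.1}, each $g_n$ satisfies (A1) with $\mu_\cdot=Au_\cdot$, $\lambda_\cdot\equiv 0$, $f_\cdot=Au_\cdot+|\theta_n(\cdot)|$, $\varphi_\cdot\equiv 0$, so \cref{pro:4.2} bounds $\|z^n\|_{\M^\beta}$ in terms of $\|y^n\|_{\s^\beta}$ and the data. It\^o--Tanaka's formula applied to $|y^n_\cdot|$, together with (H1) at $y_2=0$, gives $|y^n_t|\le\E[|\xi_n|+\int_t^T(u_s\rho(|y^n_s|)+|\theta_n(s)|)\,\dd s\,|\,\F_t]$ for $t\in[0,T]$; using $\rho(x)\le A(1+x)$ and \cref{pro:2.3}---whose hypothesis \eqref{eq:2.1} holds because $y^n\in\s^2$ and $\int_0^Tu_s\,\dd s\le M$---one obtains $|y^n_t|\le e^{AM}\E[\Xi_n\,|\,\F_t]$ with $\Xi_n:=|\xi_n|+AM+\int_0^T|\theta_n(s)|\,\dd s$. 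The family $\{\Xi_n\}$ is uniformly integrable by (H5), hence $\{y^n\}$ is uniformly of class (D) and $\sup_n\E[(\sup_t|y^n_t|)^\beta]<+\infty$ for every $\beta\in(0,1)$ by Doob's weak-type maximal inequality. Crucially, $u_s$ being $\F_s$-measurable,
\[
\E\Big[\int_0^Tu_s|y^n_s|\,\dd s\Big]\le e^{AM}\,\E\Big[\int_0^Tu_s\,\E[\Xi_n\,|\,\F_s]\,\dd s\Big]=e^{AM}\,\E\Big[\Xi_n\int_0^Tu_s\,\dd s\Big]\le e^{AM}M\,\E[\Xi_n],
\]
so $\sup_n\E[\int_0^Tu_s|y^n_s|\,\dd s]<+\infty$; this uniform bound is precisely what makes the integrability hypotheses \eqref{eq:2.1}--\eqref{eq:2.2} verifiable for the difference processes below, and it yields \eqref{eq:3.4} for the limit by Fatou's lemma.

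For the Cauchy property, set $\delta^{nm}_t:=|y^n_t-y^m_t|$. It\^o--Tanaka's formula, (H1) and $g_n(t,\cdot)-g_m(t,\cdot)=\theta_n(t)-\theta_m(t)$ give
\[
\delta^{nm}_t\le\E\Big[\zeta^{nm}+\int_t^Tu_s\rho(\delta^{nm}_s)\,\dd s\;\Big|\;\F_t\Big],\qquad \zeta^{nm}:=|\xi_n-\xi_m|+\int_0^T|\theta_n(s)-\theta_m(s)|\,\dd s,
\]
with $\E[\zeta^{nm}]\to 0$ as $n,m\to+\infty$ by dominated convergence. I would then deduce $\sup_t\delta^{nm}_t\to 0$ in probability by combining \cref{pro:2.3} (using, besides $\rho(x)\le A(1+x)$, the sharper concavity bound $\rho(x)\le\rho(\varepsilon)+\varepsilon^{-1}\rho(\varepsilon)x$), \cref{pro:2.4}, Doob's maximal inequality for the martingale $s\mapsto\E[\zeta^{nm}\,|\,\F_s]$, and a stopping-time localisation at the first time this martingale exceeds a level tending to $0$---the localisation being what lets one replace the non-deterministic term $\E[\zeta^{nm}\,|\,\F_t]$ by a small \emph{constant}, so that \cref{pro:2.4} becomes applicable. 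The uniform class-(D) bound then upgrades convergence in probability to $\|y^n-y^m\|_1\to0$, and with the $\s^\beta$-bounds to Cauchyness of $(y^n)$ in $\s^\beta$; \cref{pro:4.2} applied to $(y^n-y^m,z^n-z^m)$ gives Cauchyness of $(z^n)$ in $\M^\beta$. Denoting by $(y,z)\in\bigcap_{\beta\in(0,1)}\s^\beta\times\M^\beta$ the limit, with $y$ of class (D), one passes to the limit in \eqref{101}: $\xi_n\to\xi$ and $\int_\cdot^Tz^n_s\,\dd B_s\to\int_\cdot^Tz_s\,\dd B_s$ in probability, $\theta_n\to g(\cdot,0)$ in $\h^1$, and by the continuity in (H2) together with the domination $|g(s,y^n_s)|\le\psi_R(s)$ on $\{\sup_t|y^n_t|\le R\}$ (whose probability is close to $1$ uniformly in $n$ by the previous step) one gets $\int_\cdot^Tg_n(s,y^n_s)\,\dd s\to\int_\cdot^Tg(s,y_s)\,\dd s$ in probability; hence $(y,z)$ solves \eqref{101}, and \eqref{eq:3.4} holds by Fatou.

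The hard part will be the Cauchy step: the difference inequality unavoidably carries the non-deterministic term $\E[\zeta^{nm}\,|\,\F_t]$, whereas \cref{pro:2.4} requires a deterministic constant; bridging this gap---while simultaneously keeping the hypotheses \eqref{eq:2.1}--\eqref{eq:2.2} in force, for which the uniform bound $\sup_n\E[\int_0^Tu_s|y^n_s|\,\dd s]<+\infty$ obtained above is indispensable---is the delicate, and, as the authors note, innovative core of the argument; verifying \eqref{eq:2.2} when invoking \cref{pro:2.4} is the specific technical difficulty flagged before the statement.
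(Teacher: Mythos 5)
Your overall skeleton (truncation of the data, $L^2$ approximants via (H1a)$_2$, stochastic Gronwall for the uniform class-(D) bounds and for $\E[\int_0^T u_s|y^n_s|\,\dd s]$, stochastic Bihari for the Cauchy property, then passage to the limit) is the paper's. But the step you yourself flag as the delicate core is where your proposal has a genuine gap. You propose to make \cref{pro:2.4} applicable by stopping at the first time the martingale $N^{nm}_t:=\E[\zeta^{nm}\,|\,\F_t]$ exceeds a small level $\epsilon$. This does not close: the inequality $\delta^{nm}_t\le N^{nm}_t+\E[\int_t^T u_s\rho(\delta^{nm}_s)\,\dd s\,|\,\F_t]$ carries the integral over all of $[t,T]$, and after stopping at $\sigma_\epsilon$ you are left either with the uncontrolled ``terminal'' term $\delta^{nm}_{\sigma_\epsilon}$ (which is at least of the size of the quantity you are trying to bound, so the argument is circular), or with $\rho(\delta^{nm}_s)$ on $\{s\ge\sigma_\epsilon\}$, which cannot be replaced by $\rho$ of the stopped process because monotonicity of $\rho$ goes the wrong way for that substitution; and the good event $\{\sup_t N^{nm}_t\le\epsilon\}$ is only $\F_T$-measurable, so its indicator cannot be moved inside the conditional expectations. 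The paper sidesteps the problem entirely: it takes $\limsup_{n}\sup_i$ of the difference inequality \emph{first}. Because the perturbation $H_n(t)$ is monotone in $n$ and tends to $0$ in $L^1$, it vanishes a.s.\ for each fixed $t$, so the process $h_t:=\varlimsup_n\sup_i|\hat y^{n,i}_t|$ satisfies the Bihari inequality with the deterministic constant $c=0$; \cref{pro:2.4} (whose hypothesis \eqref{eq:2.2} is checked by exactly the kind of uniform bound $\E[\int_0^Tu_s\sup_{n,i}|\hat y^{n,i}_s|\,\dd s]<+\infty$ that you derived) then gives $h\equiv 0$, and uniformity in $t$ is recovered \emph{afterwards} from the maximal inequality for the dominating submartingale $H_n(t)+\E[\int_0^Tu_s\rho(\sup_i|\hat y^{n,i}_s|)\,\dd s\,|\,\F_t]$ together with dominated convergence. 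Without this (or an equivalent device) your Cauchy step does not go through.

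A secondary point: since $T=+\infty$ is allowed, truncating $g(t,0)$ at the constant level $n$ only yields $\int_0^T|\theta_n(t)|\,\dd t\le\int_0^T|g(t,0)|\,\dd t$, which is merely in $L^1$, so the square-integrability $\E[(\int_0^T|g_n(t,0)|\,\dd t)^2]<+\infty$ required to invoke the $L^2$ existence theorem for the approximating equations is not guaranteed. The paper truncates at the level $ne^{-t}$, which forces $\int_0^T|g^n(t,0)|\,\dd t\le n$ pathwise; your scheme should be adjusted accordingly.
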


\begin{proof}
For each $ x\in \R^k, r\in\R_+$ and $n\geq1$, define the function $q_r(x):=\frac{rx}{r \vee |x|}$, and let
$$\xi^n:=q_n(\xi) \ \ {\rm and} \ \ g^n(t,y):=g(t,y)-g(t,0)+q_{ne^{-t}}(g(t,0)).$$
Then, for each $n\geq1$ and $t\in[0,T]$, we have
$$ |\xi^{n}|\leq n \ \ {\rm and} \ \ |g^n(t,0)|\leq ne^{-t}.$$
It is straightforward to verify that for each $n\geq1$, $g^n$ satisfies assumptions (H1), (H2) and
$$\E\left[|\xi^n|^2+\left(\int_0^T|g^n(t,0)| \dd t\right)^2\right]<+\infty.$$
By \cref{pro:4.1} we know that $g^n$ also satisfies assumptions (H1a)$_2$ for each $n\geq1$. It then follows from Theorem 3.2 in \citet{LiXuFan2021} that the BSDE
\begin{equation}\label{3}
y^n_t=\xi^n+\int^{T}_{t}g^n(s,y^n_s)\,\dd s-\int^{T}_{t}z_s^n\,\dd B_s, \ \ t\in[0,T]
\end{equation}
admits a unique $L^2$ solution $(y_t^n,z_t^n)_{t\in[0,T]}$. For each $n,i\geq1$, we set $$\hat{y}^{n,i}_\cdot:=y^{n+i}_\cdot-y^n_\cdot, \ \ \  \hat z^{n,i}_\cdot:=z^{n+i}_\cdot-z^n_\cdot \ \ \ \text{and} \ \ \ \hat\xi^{n,i}:=\xi^{n+i}-\xi^{n}.$$

The following proof will be divided into four steps.\vspace{0.1cm}

\textbf{First step.} We prove that the sequence of random variables $\{\supi\supt|\hat y_t^{n,i}|^\beta\}_{n=1}^{+\infty}$ is a uniformly integrable for each $\beta\in(0,1)$.\vspace{0.1cm}

First, in light of (\ref{3}), by Corollary 2.3 in \citet{Bri03}, the definition of $g^n(t,y)$ and the assumption (H1) we can deduce that for each $n,i\geq1$,
\begin{equation}\label{4}
|\hat y_t^{n,i}|\leq H_n(t)+\E\left[\int_t^T u_s \rho(|\hat y^{n,i}_s|)\,\dd s \bigg| \F_t \right], \ \ t\in[0,T],
\end{equation}
where
$$
H_n(t):=\E\left[|\xi|\mathbbm{1}_{|\xi|>n} + \int_0^T |g(s,0)|\mathbbm{1}_{|g(s,0)|>ne^{-s}}\dd s \bigg|\F_t \right].\vspace{0.1cm}
$$
It then follows from \eqref{eq:4.1} and \eqref{4} that
\begin{equation}
|\hat y_t^{n,i}|\leq \E\left[\zeta_n+\int_t^T u_s |\hat y^{n,i}_s| \dd s \bigg| \F_t \right], \ \ t\in[0,T],\nonumber
\end{equation}
where
$$
\zeta_n:=|\xi|\mathbbm{1}_{|\xi|>n} + \int_0^T |g(s,0)|\mathbbm{1}_{|g(s,0)|>ne^{-s}}\dd s + A\int_0^T u_s \dd s.\vspace{0.2cm}
$$
Since $y_\cdot^n \in \s^2$ for each $n\geq1$ and $u.\in L^\infty(\Omega;L^1([0,T];\R_+))$, by virtue of the stochastic Gronwall-type inequality (see \cref{pro:2.3}) we can get that for each $n,i\geq 1$ and $t\in\T$,
\begin{equation}\label{5}
|\hat y_t^{n,i}| \leq \Dis e^{A\|\int_0^Tu_s \dd s\|_\infty}\left(A\E\left[\displaystyle\int_0^T u_s \dd s \bigg| \F_t\right]+H_n(t)\right)\leq \Dis e^{AM}\left(AM+H_n(t)\right).
\end{equation}

Next, let us consider the integrability of $H_n(\cdot)$. In fact, Lemma 6.1 in \citet{Bri03} along with assumption (H5) yields that for each $\beta\in(0,1)$,
\begin{equation}\label{6}
\begin{array}{lll}
\supn\E[\supt|H_n(t)|^\beta] & \leq & \Dis \frac{1}{1-\beta} \supn(\E[H_n(T)])^\beta\vspace{0.1cm}\\
& \leq & \Dis \frac{1}{1-\beta} \left(\E\left[|\xi| + \displaystyle\int_0^T |g(s,0)|\dd s \right]\right)^\beta < +\infty.
\end{array}
\end{equation}
Combining \eqref{5} and \eqref{6}, we obtain that for each $\beta\in(0,1)$,
\begin{equation}\label{7}
\begin{array}{lll}
\supn\E\left[\supi\supt|\hat{y}_t^{n,i}|^\beta\right] \leq \supn\E\left[e^{\beta AM}\left(AM+\supt H_n(t)\right)^\beta\right] < +\infty,
\end{array}
\end{equation}
which indicates that the sequence of random variables $\{\supi\supt |\hat{y}_t^{n,i}|^{\beta'}\}_{n=1}^{+\infty}$ is uniformly integrable for each $\beta'\in (0,\beta)$. Thus, the desired assertion follows \vspace{0.2cm}immediately.

\textbf{Second step.} \ We prove that $\{y^n_\cdot\}_{n=1}^{+\infty}$ is a Cauchy sequence in $\s^\beta$ for each $\beta\in(0,1)$ and also converges under the norm $\|\cdot\|_1$.

In light of the definition of $H_n(\cdot)$ and the assumption (H5), by taking first supremum with respect to $i$ and then superlimit with respect to $n$ on both sides of (\ref{4}) we have
\begin{equation}\label{8}
\begin{array}{lll}
\varlimsup\limits_{ n\rightarrow\infty}\supi|\hat y_t^{n,i}| & \leq &\varlimsup\limits_{n\rightarrow\infty}H_n(t) + \varlimsup\limits_{n\rightarrow\infty}\supi\E\left[\displaystyle\int_t^T u_s\rho(|\hat y_s^{n,i}|) \dd s\bigg|\F_t\right]\vspace{0.1cm}\\
& = & \varlimsup\limits_{n\rightarrow\infty}\supi\E\left[\displaystyle\int_t^T u_s\rho(|\hat y_s^{n,i}|) \dd s\bigg|\F_t\right], \ \ t\in[0,T].
\end{array}
\end{equation}
A combination of \eqref{eq:4.1}, (\ref{5}), the definition of $H_n(\cdot)$, Fubini's theorem and assumption (H5) leads to
\begin{equation}\label{11}
\begin{array}{lll}
\E\left[\Dis \int_0^T u_s \rho(\supn\supi|\hat y^{n,i}_s|)\,\dd s \right]
& \leq & \Dis A\E\left[\int_0^T u_s\, \supn\supi|\hat y^{n,i}_s| \dd s \right]+A\E\left[\int_0^T u_s \dd s\right] \vspace{0.2cm}\\
& \leq & \Dis Ae^{AM}\E\left[\int_0^T u_s \left(AM+\supn H_n(s)\right)\dd s \right]+AM\vspace{0.2cm}\\
 & = & \Dis Ae^{AM}\E\left[\int_0^T u_s \, \supn H_n(s)\dd s \right]+\bar C_{A,M}\vspace{0.2cm}\\
 & \leq & \Dis
 Ae^{AM}\E\left[\int_0^T u_s \E\left[|\xi|+\int_0^T|g(t,0,0)| \dd t\bigg|\F_s\right] \dd s \right]+\bar C_{A,M}\vspace{0.2cm}\\
  & = &\Dis
 Ae^{AM}\E\left[\int_0^T u_s\left(|\xi|+\int_0^T|g(t,0,0)| \dd t\right) \dd s\right]+\bar C_{A,M}\vspace{0.2cm}\\
  & \leq &\Dis
 AMe^{AM} \E\left[|\xi|+\int_0^T|g(t,0,0)| \dd t\right]+\bar C_{A,M}<+\infty,
\end{array}
\end{equation}
where
$$
\bar C_{A,M}:=A^2M^2e^{AM}+AM.
$$
Now, we set
$$h_t:=\varlimsup\limits_{n\rightarrow\infty}\supi|\hat y_t^{n,i}|.$$
Combining (\ref{8}), (\ref{11}), Fatou's lemma and the continuity and monotonicity of $\rho(\cdot)$ yields that
$$
h_t\leq\E\left[\int_t^T u_s \rho(h_s)\dd s \bigg| \F_t\right],\ \ \ t\in[0,T].
$$
and
$$
\E\left[\int_0^T u_s h_s\dd s\right]\leq \E\left[\int_0^T u_s\, \supn\supi|\hat y^{n,i}_s| \dd s \right]<+\infty.\vspace{0.1cm}
$$
Then, by the stochastic Bihari-type inequality (see \cref{pro:2.4}) we can conclude that
\begin{equation}\label{13}
h_t=\varlimsup\limits_{n\rightarrow\infty}\supi|\hat y_t^{n,i}|\equiv 0, \ \ \ t\in[0,T],
\end{equation}
which indicates that for each fixed $t\in[0,T]$, \ $\supi|\hat y_t^{n,i}|\rightarrow 0$ as $n\rightarrow\infty$.\vspace{0.2cm}

Furthermore, in light of (\ref{11}), by taking first the supremum in $i$ and then $t$ on both sides of (\ref{4}) and using the maximum inequality on submartingales we can deduce that for each $\varepsilon>0$ and $n\geq 1$,
\begin{equation}\label{14}
\begin{array}{l}
\mathbbm{P}\left(\left\{\supt\supi|\hat y^{n,i}_t| \geq \varepsilon\right\}\right)\vspace{0.1cm}\\
\ \ \leq \mathbbm{P}\left(\left\{\supt\left(H_n(t)+\E\left[\displaystyle\int_0^T u_s \rho(\supi |\hat y^{n,i}_s|)\,\dd s \bigg| \F_t \right]\right)\geq \varepsilon\right\}\right)\vspace{0.1cm}\\
\ \ \leq \displaystyle\frac{1}{\varepsilon} \E\left[H_n(T)+\displaystyle\int_0^T u_s \rho(\supi |\hat y^{n,i}_s|)\,\dd s\right].
\end{array}
\end{equation}
Now, in light of (H5), the monotonicity and continuity of $\rho(\cdot)$ with $\rho(0)=0$,  (\ref{13}) and (\ref{11}), letting $n\rightarrow\infty$ in (\ref{14}) and using Lebesgue's dominated convergence theorem yields that for each $\varepsilon>0$,
\begin{equation}\label{15}
\limn\mathbbm{P}\left(\left\{\supt\supi|\hat y^{n,i}_s| \geq \varepsilon\right\}\right)= 0.
\end{equation}
Thus, by virtue of (\ref{7}) and (\ref{15}) we can conclude that
\begin{equation}\label{16}
\limn\supi\E\left[\supt|\hat{y}_{t}^{n,i}|^{\beta}\right]=\limn\E\left[\supt\supi|\hat{y}_{t}^{n,i}|^{\beta}\right] =0, \ \ \ \forall \beta\in(0,1).\vspace{0.1cm}
\end{equation}
That is, $\{y_\cdot^n\}_{n=1}^{+\infty}$ is a Cauchy sequence in $\s^{\beta}$ for each $\beta\in (0,1)$.\vspace{0.2cm}

Finally, we verify the convergence under the norm $\|\cdot\|_1$. It follows from (\ref{4}) that
\begin{equation}\label{31}
\E\left[|\hat y_t^{n,i}|\right]\leq c_n+\E\left[\int_0^T u_s \rho(|\hat y^{n,i}_s|)\,\dd s \right],
\end{equation}
where
$$
c_n :=\E\left[|\xi|\mathbbm{1}_{|\xi|>n} + \int_0^T |g(s,0)|\mathbbm{1}_{|g(s,0)|>ne^{-s}}\dd s \right].
$$
Then, in light of (H5), the monotonicity and continuity of $\rho(\cdot)$ with $\rho(0)=0$, (\ref{13}), (\ref{11}) and Lebesgue's dominated convergence theorem, by taking first supremum with respect to $t$ and $i$, then limit with respect to $n$ on both sides of (\ref{31}), we obtain that
\begin{equation}\label{17}
\begin{array}{lll}
0\leq \lim\limits_{n\rightarrow\infty}\supi\supt\E[|\hat{y}_{t}^{n,i}|] & \leq & \lim\limits_{n\rightarrow\infty}c_n+\lim\limits_{n\rightarrow\infty}\supi\E\left[\displaystyle\int_0^T u_s \rho(|\hat y^{n,i}_s|)\,\dd s \right]\vspace{0.1cm}\\
& \leq & \lim\limits_{n\rightarrow\infty}\E\left[\displaystyle\int_0^T u_s \rho(\supi|\hat y^{n,i}_s|)\,\dd s \right]= 0,
\end{array}
\end{equation}
which means that $\{y_\cdot^n\}_{n=1}^{+\infty}$ is also a Cauchy sequence under the norm $\|\cdot\|_{1}$.\vspace{0.2cm}

{\bf Third step.} \ We prove that $\{z_\cdot^n\}_{n=1}^{+\infty}$ is a Cauchy sequence in $\M^\beta$ for each $\beta\in(0,1)$.\vspace{0.1cm}

Note that for each $n,i\geq 1$, $(\hat y_t^{n,i},\hat z_t^{n,i})_{t\in[0,T]}$ is an $L^2$ solution of the following BSDE:
\begin{equation}
\hat{y}_t^{n,i}=\hat \xi^{n,i}+\int^{T}_{t}\hat g^{n,i}(s,\hat y_s^{n,i})\dd s-\int^{T}_{t}\hat{z}_s^{n,i}\dd B_s, \quad t\in[0,T],
\end{equation}
in which, $\as$, for each $n,i\geq 1$ and $y\in\R^k$,
$$\hat g^{n,i}(t,y):=g^{n+i}(t,y+y^n_t)-g^n(t,y^n_t).$$
By virtue of the definitions of $\hat g^{n,i}$ and $g^n$, the assumption (H1) on the generator $g$ and \cref{pro:4.1}, it is not hard to verify that for each $n,i\geq 1$, the generator $\hat g^{n,i}$ satisfies the assumption (H1a)$_2$ with a linearly growing function $\kappa(\cdot)\in {\bf S}$ satisfying that for each $m\geq1$,
\begin{equation}\label{115}
\kappa(x)\leq (m+A)x+\kappa\left(\frac{A}{m}\right), \ \ x\in\R^+.
\end{equation}
In fact, if $0\leq x\leq\frac{A}{m}$, (\ref{115}) is obvious, otherwise if $ x>\frac{A}{m}$, then $\kappa(x)\leq Ax+A\leq Ax+mx$.
Then, we deduce that for each $m,n,i\geq 1$ and $\beta\in (0,1)$, the generator $\hat{g}^{n,i}$ satisfies the assumption (A1) with
$$p:=\beta,\ \  \mu_\cdot:=(m+A)u_\cdot,\ \  \lambda_\cdot:\equiv 0, \ \ f_t:=|g(t,0)|\mathbbm{1}_{|g(t,0)|>ne^{-t}},\ \  \varphi_\cdot:=\kappa\left(\frac{A}{m}\right)u_\cdot.$$
Since $\hat{y}_\cdot^{n,i}\in \s^\beta$ for each $\beta\in (0,1)$, by \cref{pro:4.2} with $u=t=0$ we know that there exists a constant $C_\beta>0$ depending only on $\beta$ such that for each $m,n,i\geq 1$,
\begin{equation}\label{21}
\begin{array}{l}
\E\left[\left(\Dis \int_0^T |\hat z_t^{n,i}|^2 \dd t\right)^{\frac{\beta}{2}}\right]\\
\ \ \leq C_\beta(1+(m+A)M)^{\frac{\beta}{2}}\E\left[\supt|\hat y_t^{n,i}|^\beta\right]+
C_\beta\Dis \left(\kappa\left(\frac{A}{m}\right)\right)^{\frac{\beta}{2}}\E\left[\left(\Dis \int_0^T u_t \dd t\right)^{\frac{\beta}{2}}\right]\vspace{0.1cm}\\
\ \ \ \ + C_\beta\E\left[\left(\Dis \int^{T}_{0}|g(t,0)|\mathbbm{1}_{|g(t,0)|>ne^{-t}}\dd t\right)^\beta\right].
\end{array}
\end{equation}
Then, in light of (\ref{16}), the assumption (H5) on the generator $g$ and Lebesgue's deminated convergence theorem along with the continuity of $\kappa(\cdot)$ with $\kappa(0)=0$, by taking first superemum on $i$, then letting $n\rightarrow\infty$ and finally $m\rightarrow\infty$ in (\ref{21}), we can conclude that for each $\beta\in (0,1)$,
\begin{equation}\label{22}
\limn\supi\E\left[\left(\int_0^T |\hat z_t^{n,i}|^2 \dd t\right)^{\frac{\beta}{2}}\right]=0,
\end{equation}
which means that $\{z_\cdot^{n}\}_{i=1}^{+\infty}$ is a Cauchy sequence in space $\M^{\beta}$ for each $\beta\in (0,1)$.\vspace{0.1cm}

Finally, in light of (\ref{16}), (\ref{17}) and (\ref{22}) along with (H2), sending $n\rightarrow\infty$ in (\ref{3}) yields that BSDE (\ref{101}) admits an $L^1$ solution $(y_t,z_t)_{t\in[0,T]}$, which is the limit  of $\{(y^n_\cdot,z^n_\cdot)\}_{n=1}^{+\infty}$ in $\bigcap_{\beta\in(0,1)}\s^\beta\times\M^\beta$.\vspace{0.2cm}

{\bf Fourth step.} We prove that the process $y_\cdot$ satisfies (\ref{eq:3.4}).\vspace{0.1cm}

Note that $(y^n_t,z^n_t)_{t\in\T}$ is the unique $L^2$ solution of BSDE (\ref{3}). A computation similar to (\ref{4}) yields that for each $n\geq 1$ and $t\in [0,T]$,
$$
|y^n_t|\leq H(t)+\E\left[\left.\int_t^T u_s\rho(y^n_s) \dd s\right|\F_t\right],
$$
where
$$
H(t):=\E\left[\left. |\xi|+\int_0^T |g(t,0)| {\rm d}t  \right|\F_t\right].\vspace{0.1cm}
$$
And, by an analysis similar to (\ref{5}) in the first step, we can deduce that for each $n\geq 1$,
$$
|y^n_t|\leq e^{AM}\left(AM+ H(t)\right),\  \  t\in [0,T].
$$
It then follows that, in light of (\ref{11}),
$$
\E\left[\int_0^T u_t |y_t| {\rm d}t\right]\leq \E\left[\int_0^T u_t \sup\limits_{n\geq 1}|y^n_t| {\rm d}t\right]\leq e^{AM}\left(AM^2+\E\left[\int_0^T u_t H(t) {\rm d}t\right] \right)<+\infty.
$$
The proof of \cref{pro:4.3} is then complete.
\end{proof}

\begin{rmk}
To the best of our knowledge, the assertion that the above-constructed process $y_\cdot$ satisfies (\ref{eq:3.4}) is explored at the first time. Based on this assertion, in the next subsection we will prove that the same conclusion holds for the general case that the generator $g$ may depend on the state variable $z$.\vspace{0.2cm}
\end{rmk}


\subsection{Proof of the existence part of \cref{th}: the general case\vspace{0.2cm}}

Let us further present three a priori estimates on the $L^p$ solution of BSDEs which will play important roles in the proof of \cref{th}.\vspace{0.1cm}

First, the following two propositions come from Lemma 2.6 in \citet{LiFan2023} and Proposition 5.1 in \citet{LiXuFan2021}, respectively. In stating them, we need the following assumption (A2).

\begin{enumerate}
\renewcommand{\theenumi}{(A2)}
\renewcommand{\labelenumi}{\theenumi}
\item Assume $p>1$ and that $\as$, for each $(y,z)\in\R^k\times\R^{k\times d}$, we have
$$
|y|^{p-1}\left\langle \frac{y}{|y|}\mathbbm{1}_{|y|\neq 0},\ \ g(\omega,t,y,z)\right\rangle \leq \mu_t(\omega)\psi(|y|^p)+\lambda_t(\omega)|y|^{p-1}|z|+|y|^{p-1}f_t(\omega),
$$
where $\mu_\cdot$ and $\lambda_\cdot$ are defined in (A1), $f_\cdot\in\mathcal{H}^p(0,T;\R)$ and $\psi(\cdot): \mathbb{R}_{+}\rightarrow\mathbb{R}_{+}$ is a nondecreasing concave function with $\psi(0)=0.$
\end{enumerate}

\begin{pro}\label{pro:4.5}
Let $p>1$, assumption (A2) hold and $(y_t,z_t)_{t\in[0,T]}$ be an $L^p$ solution of BSDE (\ref{101}). Then there exists a constant $C_{p,M}>0$ depending only on $p$ and $M$ such that for each $0\leq u \leq t\leq T$,
$$
\mathbb{E}\left[\sup _{s \in[t, T]}\left|y_{s}\right|^{p} \bigg|{\F}_{u}\right] \leq C_{p, M} \mathbb{E}\left[|\xi|^{p}+\int_{t}^{T} \mu_{s} \psi\left(\left|y_{s}\right|^{p}\right) \mathrm{d} s+\left(\int_{t}^{T} f_{s} \mathrm{ d} s\right)^{p} \bigg|{\F}_{u}\right].\vspace{0.2cm}
$$
\end{pro}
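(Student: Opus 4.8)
The plan is to prove \cref{pro:4.5} by applying the classical It\^o–Tanaka technique to the process $|y_t|^p$, and then converting the resulting estimate into a conditional one by localization, finally invoking \cref{pro:2.3} (or at least the same Gronwall-type bookkeeping) to absorb the $\psi$-term. First I would apply It\^o's formula to $|y_t|^p$ on $[t,T]$ — since $p>1$ this is legitimate up to a local-time correction that has the right sign — to obtain, for $t\le s\le T$,
\begin{equation*}
|y_s|^p + c(p)\int_s^T |y_r|^{p-2}\mathbbm{1}_{|y_r|\neq 0}|z_r|^2\,\dd r \le |\xi|^p + p\int_s^T |y_r|^{p-1}\left\langle \tfrac{y_r}{|y_r|}\mathbbm{1}_{|y_r|\neq 0},\, g(r,y_r,z_r)\right\rangle \dd r - p\int_s^T |y_r|^{p-1}\left\langle \tfrac{y_r}{|y_r|}\mathbbm{1}_{|y_r|\neq 0},\, z_r\,\dd B_r\right\rangle.
\end{equation*}
Plugging in assumption (A2) bounds the drift by $p\mu_r\psi(|y_r|^p) + p\lambda_r|y_r|^{p-1}|z_r| + p|y_r|^{p-1}f_r$. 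The cross term $p\lambda_r|y_r|^{p-1}|z_r|$ is handled by Young's inequality, $p\lambda_r|y_r|^{p-1}|z_r|\le \tfrac{c(p)}{2}|y_r|^{p-2}|z_r|^2 + C\lambda_r^2|y_r|^p$, so that half the quadratic term is absorbed into the left side and a new term $C\lambda_r^2|y_r|^p$ appears in the drift; similarly $p|y_r|^{p-1}f_r\le |y_r|^p + C f_r^p$ by Young (or is kept as is and later dominated by $(\int f_r\,\dd r)^p$ via H\"older).

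Next I would take conditional expectation given $\mathcal F_u$ and control the stochastic integral. Because $(y,z)$ is only an $L^p$ solution, the local martingale $\int_0^\cdot |y_r|^{p-1}\langle \cdot,z_r\,\dd B_r\rangle$ need not be a true martingale, so I would introduce a localizing sequence of stopping times $\tau_m\uparrow T$, write the inequality on $[s\wedge\tau_m, \tau_m]$, take conditional expectation to kill the martingale, and then pass to the limit using the Burkholder–Davis–Gundy inequality together with $y\in\s^p$, $z\in\M^p$ to justify dominated convergence — this is the standard argument (e.g. as in Briand et al.\ \cite{Bri03}). After the limit, taking $\sup_{s\in[t,T]}$ before the conditional expectation and again using BDG on the martingale part (now with the $\sup$ inside) yields
\begin{equation*}
\E\Big[\sup_{s\in[t,T]}|y_s|^p \,\Big|\,\mathcal F_u\Big] \le C\,\E\Big[|\xi|^p + \int_t^T \mu_r\psi(|y_r|^p)\,\dd r + \int_t^T \lambda_r^2|y_r|^p\,\dd r + \Big(\int_t^T f_r\,\dd r\Big)^p \,\Big|\,\mathcal F_u\Big],
\end{equation*}
with $C$ depending only on $p$.

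Finally, the term $\int_t^T\lambda_r^2|y_r|^p\,\dd r$ must be removed. Setting $\bar\mu_s:=\E[\sup_{r\in[s,T]}|y_r|^p\mid\mathcal F_s]$ and noting $\|\int_0^T\lambda_r^2\,\dd r\|_\infty\le M$ by (A1), the displayed inequality has exactly the form required to apply the first (Gronwall) part of \cref{pro:2.3} with $\beta_s:=C\lambda_s^2$, $\eta:=C(|\xi|^p+\int_t^T\mu_r\psi(|y_r|^p)\,\dd r + (\int_t^T f_r\,\dd r)^p)$ and $\bar f\equiv0$ — here one checks $\E[\int_0^T\beta_s\bar\mu_s\,\dd s]<\infty$ since $y\in\s^p$ and $\beta_\cdot\in L^\infty(\Omega;L^1)$. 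This produces the factor $e^{\|C\int_t^T\lambda_s^2\,\dd s\|_\infty}\le e^{CM}$, which gets folded into the final constant $C_{p,M}$, giving precisely the claimed estimate. The main obstacle I expect is the careful localization and uniform-integrability argument needed to pass from the $L^p$-solution (where the stochastic integral is only a local martingale) to genuine conditional expectations — everything else is Young's inequality, BDG, and a direct invocation of \cref{pro:2.3}; one should also double-check that the local-time term from It\^o's formula for $|y|^p$ when $p\in(1,2)$ contributes with the correct (favorable) sign so it can simply be discarded.
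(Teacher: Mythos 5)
The paper does not prove this proposition itself---it imports it verbatim as Lemma 2.6 of Li and Fan (2023)---and your outline (It\^o--Tanaka applied to $|y|^p$ with the favorable sign of the local-time correction, Young's inequality to absorb the cross term $\lambda_r|y_r|^{p-1}|z_r|$ into the quadratic term, localization plus BDG to treat the stochastic integral conditionally, and the stochastic Gronwall inequality of \cref{pro:2.3} to remove $\int_t^T\lambda_r^2|y_r|^p\,\dd r$) is exactly the argument used there. The one detail to fix is the $f$-term: the pointwise Young bound $p|y|^{p-1}f\le |y|^p+Cf^p$ yields $\int_t^T f_r^p\,\dd r$ rather than the required $\big(\int_t^T f_r\,\dd r\big)^p$ (and $f_\cdot\in\mathcal{H}^p$ only controls the latter), so you must use your second option, $p\int_t^T|y_r|^{p-1}f_r\,\dd r\le \tfrac12\sup_{r\in[t,T]}|y_r|^p+C_p\big(\int_t^T f_r\,\dd r\big)^p$.
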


\begin{pro}\label{pro:4.6}
Let (A2) hold with $p=2$ and $(y_t,z_t)_{t\in[0,T]}$ be an $L^2$ solution of BSDE(\ref{101}). Then there exists a constant $C_M>0$ depending only on $M$ such that for each $0\leq u \leq t\leq T$,
$$
\mathbb{E}\left[\sup\limits_{s\in[t,T]}\left|y_{s}\right|^{2} \bigg|{\F}_{u}\right] +\E\left[\left.\Dis \int_t^T
|z_s|^2{\rm d}s\right|\F_u\right] \leq C_M \mathbb{E}\left[|\xi|^{2}+\int_{t}^{T}\mu_{s}
\psi(|y_{s}|^{2})\mathrm{d}s+\left(\int_{t}^{T} f_{s}\mathrm{ d} s\right)^{2} \bigg|{\F}_{u}\right].\vspace{0.2cm}
$$
\end{pro}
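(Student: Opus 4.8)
The plan is to bound the two pieces on the left-hand side separately --- the $\sup$-term by invoking \cref{pro:4.5}, and the $\int|z|^2$-term by applying It\^o's formula to $|y|^2$ --- and then to glue the two estimates. Throughout, fix $0\le u\le t\le T$ and abbreviate
\[I_u:=\E\Big[|\xi|^2+\int_t^T\mu_s\psi(|y_s|^2)\,\dd s+\Big(\int_t^T f_s\,\dd s\Big)^2\,\Big|\,\F_u\Big],\]
which is the quantity appearing on the right of the claimed inequality.

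\emph{Step 1 (the $\sup$-term).} Since $\psi$ is nondecreasing, concave and $\psi(0)=0$, it satisfies $\psi(x)\le\psi(1)(1+x)$ for $x\ge0$; combined with $y_\cdot\in\s^2$, $f_\cdot\in\h^2(0,T;\R)$ and $\mu_\cdot,\lambda_\cdot^2\in L^\infty(\Omega;L^1(\T;\R_+))$ (from (A2)), this shows that $I_u$ is a.s.\ finite, so the asserted inequality is meaningful. \cref{pro:4.5} with $p=2$ then yields directly
\[\E\Big[\sup_{s\in[t,T]}|y_s|^2\,\Big|\,\F_u\Big]\le C_{2,M}\,I_u.\]

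\emph{Step 2 (the $\int|z|^2$-term).} For this part I would apply It\^o's formula to $s\mapsto|y_s|^2$ on $[t,T]$; using $\dd y_s=-g(s,y_s,z_s)\,\dd s+z_s\,\dd B_s$ this gives
\[|y_t|^2+\int_t^T|z_s|^2\,\dd s=|\xi|^2+2\int_t^T\langle y_s,g(s,y_s,z_s)\rangle\,\dd s-2\int_t^T\langle y_s,z_s\,\dd B_s\rangle.\]
The process $r\mapsto\int_0^r\langle y_s,z_s\,\dd B_s\rangle$ is a true (not merely local) martingale, since by Cauchy--Schwarz $\E[(\int_0^T|y_s|^2|z_s|^2\,\dd s)^{1/2}]\le(\E[\sup_{s\in[0,T]}|y_s|^2])^{1/2}(\E[\int_0^T|z_s|^2\,\dd s])^{1/2}<+\infty$ for an $L^2$ solution; hence taking $\E[\,\cdot\,|\F_u]$ annihilates the stochastic integral (recall $u\le t$), and dropping the nonnegative term $\E[|y_t|^2|\F_u]$ leaves $\E[\int_t^T|z_s|^2\dd s|\F_u]\le\E[|\xi|^2+2\int_t^T\langle y_s,g(s,y_s,z_s)\rangle\,\dd s\,|\,\F_u]$. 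Next I would invoke (A2) with $p=2$ in the form $\langle y,g(\omega,t,y,z)\rangle\le\mu_t\psi(|y|^2)+\lambda_t|y||z|+|y|f_t$ (valid also at $y=0$ as $\psi(0)=0$), then use Young's inequality $2\lambda_s|y_s||z_s|\le\tfrac12|z_s|^2+2\lambda_s^2|y_s|^2$ together with $2\int_t^T|y_s|f_s\,\dd s\le\sup_{s\in[t,T]}|y_s|^2+(\int_t^T f_s\,\dd s)^2$, absorb $\tfrac12\E[\int_t^T|z_s|^2\dd s|\F_u]$ into the left side, and bound $\int_t^T\lambda_s^2|y_s|^2\,\dd s\le M\sup_{s\in[t,T]}|y_s|^2$ using $\int_0^T\lambda_s^2\,\dd s\le M$ from the structural constraint in (A2). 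This yields
\[\tfrac12\,\E\Big[\int_t^T|z_s|^2\,\dd s\,\Big|\,\F_u\Big]\le 4\,I_u+(2M+1)\,\E\Big[\sup_{s\in[t,T]}|y_s|^2\,\Big|\,\F_u\Big].\]

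\emph{Step 3 (gluing, and the main obstacle).} Substituting the Step 1 bound into the last display gives $\E[\int_t^T|z_s|^2\dd s|\F_u]\le 2(4+(2M+1)C_{2,M})I_u$, and adding this to $\E[\sup_{s\in[t,T]}|y_s|^2|\F_u]\le C_{2,M}I_u$ produces the assertion with $C_M:=C_{2,M}+2(4+(2M+1)C_{2,M})$, which depends only on $M$. \emph{The one genuinely delicate point} is the cross term $2\lambda_s|y_s||z_s|$: after Young's inequality it leaves $2\lambda_s^2|y_s|^2$, which cannot be absorbed into the left side (the constant $M$ may be large) and is not dominated by $\psi(|y_s|^2)$; the way out is precisely to dominate it by $M\sup_{s\in[t,T]}|y_s|^2$ and then appeal to \cref{pro:4.5}, which already controls that quantity by the target right-hand side. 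No localization of the stochastic integral is needed thanks to the $\s^2\times\M^2$ integrability, and the argument does not distinguish finite from infinite $T$.
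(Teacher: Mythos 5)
Your argument is correct. Note first that the paper does not actually prove \cref{pro:4.6}: it imports it verbatim from Proposition 5.1 of Li, Xu and Fan (2021), so there is no in-paper proof to compare against. Your self-contained derivation is nonetheless sound, and it is structurally the same argument the paper itself uses to prove the analogous estimate for general $p$ in \cref{pro:4.7}: control the $\sup$-term by the already-available a priori estimate (\cref{pro:4.5} here, the Lemma 2.6 analogue there), control $\int|z|^2$ by It\^o's formula applied to $|y|^2$ plus Young's inequality and absorption, and glue. The two points that could have gone wrong are handled correctly: (i) the stochastic integral is a true martingale because $\E\bigl[(\int_0^T|y_s|^2|z_s|^2\,\dd s)^{1/2}\bigr]<+\infty$ for an $L^2$ solution, so no localization is needed; and (ii) the leftover $2\lambda_s^2|y_s|^2$ from Young's inequality, which cannot be absorbed and is not dominated by $\mu_s\psi(|y_s|^2)$, is correctly routed through $M\sup_{s\in[t,T]}|y_s|^2$ and then through \cref{pro:4.5} --- this is exactly the reason the $\sup$-estimate must be established first. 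Your final constant depends only on $M$ (since $p=2$ is fixed), as required. One very minor remark: the integrability of the right-hand side is automatic from $y_\cdot\in\s^2$, $z_\cdot\in\M^2$, $f_\cdot\in\h^2$ and $\psi(x)\le\psi(1)(1+x)$, which you note; you could also observe that only the upper bound from (A2) is needed, since $\int_t^T\langle y_s,g(s,y_s,z_s)\rangle\,\dd s$ is a.s.\ finite by the definition of a solution.
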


Next, enlightened by Propositions 1-2 of \citet{FanJiang19}, we can prove the following \cref{pro:4.7}. In stating it, the following assumption (A3) is required.

\begin{enumerate}
\renewcommand{\theenumi}{(A3)}
\renewcommand{\labelenumi}{\theenumi}
\item  Assume $p>1$ and that $\as$, for each $(y,z)\in\R^k\times\R^{k\times d}$, we have
$$
 \left\langle {y\over |y|}\mathbbm{1}_{|y|\neq 0}, \ \ g(\omega,t,y,z)\right\rangle\leq
    \mu_t(\omega)\phi^{{1\over p}}(|y|^p)+\lambda_t(\omega)|z|+f_t(\omega),
$$
where $\mu_\cdot$ and $\lambda_\cdot$ are defined in (A1), $f_\cdot\in\mathcal{H}^p(0,T;\R)$ and $\phi(\cdot): \mathbb{R}_{+} \rightarrow \mathbb{R}_{+}$ is a nondecreasing concave function with $\phi(0)=0.$ \end{enumerate}

\begin{pro}\label{pro:4.7}
Let $p>1$, assumption (A3) hold and $(y_t,z_t)_{t\in[0,T]}$ be a solution of BSDE (\ref{101}) such that $y_\cdot\in \s^p(0,T;\R^k)$. Then $z_\cdot\in\M^p(0,T;\R^{k\times d})$ and there exists a positive constant $C_{p,M}$ depending only on $(p,M)$ such that for each $0\leq u \leq t\leq T$,
$$
\mathbb{E}\left[\sup\limits_{s\in[t,T]}\left|y_{s}\right|^{p}\bigg|{\F}_{u}\right] +\E\left[\left(\Dis \int_t^T
|z_s|^2{\rm d}s\right)^\frac{p}{2}\bigg|\F_u\right]
\leq C_{p,M} \mathbb{E}\left[|\xi|^{p}+\int_{t}^{T}\mu_{s}\phi(|y_{s}|^{p})\mathrm{d}s + \left(\int_{t}^{T} f_{s}\mathrm{d}s\right)^{p}\bigg|{\F}_{u}\right].\vspace{0.2cm}
$$
\end{pro}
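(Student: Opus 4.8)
\textbf{Proof proposal for \cref{pro:4.7}.}
The plan is to run an It\^o-type computation on a suitable power of $|y_t|$ against the parameter $p>1$, combined with a localization by stopping times and an application of the Burkholder--Davis--Gundy (BDG) inequality, in the same spirit as the proofs of \cref{pro:4.5,pro:4.6} and of Propositions~1--2 in \citet{FanJiang19}. First I would fix $0\le u\le t\le T$ and introduce, for $a>0$ small, the process $(|y_s|^2+a)^{p/2}$ so as to avoid the singularity of $x\mapsto|x|^p$ at the origin; applying It\^o's formula to this smooth function and then letting $a\downarrow0$ yields, for $t\le r\le T$,
$$
|y_r|^p + \frac{p}{2}\int_r^T |y_s|^{p-2}\mathbbm{1}_{|y_s|\neq0}\Big(|z_s|^2 + (p-2)\big|\tfrac{y_s^{*}z_s}{|y_s|}\big|^2\Big)\,\dd s
= |\xi|^p + p\int_r^T |y_s|^{p-1}\Big\langle \tfrac{y_s}{|y_s|}\mathbbm{1}_{|y_s|\neq0},\, g(s,y_s,z_s)\Big\rangle\dd s - p\int_r^T |y_s|^{p-1}\tfrac{y_s^{*}}{|y_s|}z_s\,\dd B_s,
$$
where the quadratic-form term is nonnegative for $p\ge1$ (for $1<p<2$ one uses $|z|^2+(p-2)|\cdot|^2\ge (p-1)|z|^2\ge0$). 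The membership $z_\cdot\in\M^p$ follows afterwards exactly as in \cref{pro:4.2}, so I would first carry out the estimate assuming it and then close the loop.

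The second step is to insert the structural bound from assumption (A3) into the drift term: $|y_s|^{p-1}\langle \cdot, g(s,y_s,z_s)\rangle\le \mu_s\phi^{1/p}(|y_s|^p)|y_s|^{p-1}+\lambda_s|y_s|^{p-1}|z_s|+f_s|y_s|^{p-1}$. The three resulting terms are handled by Young's inequality tuned against the available $|z_s|^2$-term and against $\sup_{s}|y_s|^p$: for the $\lambda_s|y_s|^{p-1}|z_s|$ term write $\lambda_s|y_s|^{p-1}|z_s|\le \varepsilon |y_s|^{p-2}|z_s|^2\mathbbm{1}_{|y_s|\neq0} + C_\varepsilon \lambda_s^2|y_s|^p$, absorbing the first piece into the left-hand side for $\varepsilon$ small; for the $\mu_s\phi^{1/p}(|y_s|^p)|y_s|^{p-1}$ term use $\phi^{1/p}(x)x^{(p-1)/p}\le \phi(x)+x$ (valid by Young with exponents $p$ and $p/(p-1)$, since $\phi\ge0$), turning it into $\mu_s\phi(|y_s|^p)+\mu_s|y_s|^p$; and for $f_s|y_s|^{p-1}$ use $f_s|y_s|^{p-1}\le C(\int_t^T f_s\dd s)^p + \text{(small)}\sup_s|y_s|^p$ after integrating, \emph{\`a la} Bri\'and--Delyon--Hu--Peng--Stoica. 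The terms of the form $\int_t^T(\mu_s+\lambda_s^2)|y_s|^p\dd s$ are then dominated by $\|\int_t^T(\mu_s+\lambda_s^2)\dd s\|_\infty\,\sup_{s\in[t,T]}|y_s|^p\le M\sup_{s\in[t,T]}|y_s|^p$, using \eqref{eq:4.2} together with the bound on $\mu_\cdot,\lambda_\cdot$ built into (A1).

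The third step is the localization/BDG bookkeeping: take conditional expectation given $\F_u$ over the stopped equation, use a stopping sequence $\tau_N\uparrow T$ to make the stochastic integral a true martingale, and estimate $\E[\sup_{r\in[t,T]}|\int_r^{\tau_N}|y_s|^{p-1}\tfrac{y_s^{*}}{|y_s|}z_s\dd B_s|\,|\F_u]$ by BDG as $C\,\E[(\int_t^T|y_s|^{2p-2}|z_s|^2\dd s)^{1/2}|\F_u]\le C\,\E[\sup_s|y_s|^{p}(\int_t^T|y_s|^{p-2}|z_s|^2\mathbbm{1}_{|y_s|\neq0}\dd s)^{1/2}|\F_u]$, then split by Young into a small multiple of $\E[\sup_s|y_s|^p|\F_u]$ plus a small multiple of the $\int |y_s|^{p-2}|z_s|^2$-term already present; both get absorbed. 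Since $y_\cdot\in\s^p$, the quantity $\E[\sup_{s\in[t,T]}|y_s|^p]$ is finite, which legitimizes these absorptions; finally let $N\to\infty$ by monotone/dominated convergence. Collecting the surviving terms gives the claimed inequality for $\E[\sup_s|y_s|^p|\F_u]$ alone, and feeding that back into the (already established) lower bound on $\frac{p}{2}\int|y_s|^{p-2}|z_s|^2$ — equivalently redoing the computation keeping the $z$-term on the left — yields the combined estimate with $\E[(\int_t^T|z_s|^2\dd s)^{p/2}|\F_u]$ on the left, after one more Young/BDG pass to pass from $\int|y_s|^{p-2}|z_s|^2$ to $(\int|z_s|^2)^{p/2}$ in the usual way (as in \cref{pro:4.2}).

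The main obstacle I anticipate is the careful calibration of the Young-inequality constants so that every ``bad'' term — the one coming from $\lambda_s|y_s|^{p-1}|z_s|$, the BDG term, and the passage from $\int|y_s|^{p-2}|z_s|^2\dd s$ to $(\int|z_s|^2\dd s)^{p/2}$ — is absorbed simultaneously into the two nonnegative left-hand-side quantities without circular dependence, all while keeping the $\F_u$-conditioning and the localization consistent; the sub-linear term $\mu_s\phi^{1/p}(|y_s|^p)|y_s|^{p-1}$ is comparatively benign once the Young trick $\phi^{1/p}(x)x^{(p-1)/p}\le\phi(x)+x$ is in place, but it is essential that the extra $\mu_s|y_s|^p$ it produces be controlled by $M\sup_s|y_s|^p$ rather than left as is. The treatment for $1<p<2$ versus $p\ge2$ differs only in the sign/size of the $(p-2)$-coefficient and can be unified by noting the quadratic form is always bounded below by $(p-1)|z_s|^2$ when $1<p<2$ and by $|z_s|^2$ when $p\ge2$.
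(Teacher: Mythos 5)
Your overall architecture (It\^o on a regularized $p$-th power, insert (A3), Young, localize, BDG, absorb) is the standard one and much of it matches the paper, which works instead with It\^o on $|y|^2$ raised to the power $p/2$ for the $z$-estimate and cites Lemma 2.6 of \citet{LiFan2023} for the $y$-estimate. But there is a genuine gap in your second step: after your Young splits you are left with $\int_t^T(\mu_s+C_\varepsilon\lambda_s^2)|y_s|^p\,\dd s$ on the right, which you propose to dominate by $M\sup_{s\in[t,T]}|y_s|^p$ (times $C_\varepsilon$) and then absorb into the left-hand side $\E[\sup_{s\in[t,T]}|y_s|^p\,|\,\F_u]$. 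That absorption is only legitimate if the resulting coefficient is strictly less than $1$; here it is of order $C_\varepsilon M$, and $M$ is an arbitrary bound from \eqref{eq:4.2}, not a small parameter. As written, your conclusion reads $\E[\sup|y|^p\,|\,\F_u]\le C M\,\E[\sup|y|^p\,|\,\F_u]+\cdots$, which is vacuous for $M\ge 1/C$. Closing this requires either a stochastic Gronwall step (the ``moreover'' part of \cref{pro:2.3}, which is where the $e^{cM}$-type dependence of $C_{p,M}$ actually comes from) or a different decomposition that never produces a linear $\mu_s|y_s|^p$ term in the first place.

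The paper takes the second route, and it is worth seeing why it avoids your problem. The troublesome term is created by your Young split $\phi^{1/p}(x)x^{(p-1)/p}\le\phi(x)+x$ applied pointwise in $s$. The paper instead pulls $|y_s|^{p-1}\le\sup_{r\in[t,T]}|y_r|^{p-1}$ out of the time integral and applies Young to the product of $\sup_{r}|y_r|^{p-1}$ with $\int_t^T\mu_s\phi^{1/p}(|y_s|^p)\,\dd s$ (exponents $p/(p-1)$ and $p$), obtaining
$$
\tilde C_{p,M}\int_{t}^{T}\mu_{s}|y_s|^{p-1}\phi^{1/p}(|y_s|^p)\,\mathrm{d}s
\le \frac12\sup_{s\in[t,T]}|y_s|^p+\hat C_{p,M}\left(\int_{t}^{T}\mu_{s}\phi^{1/p}(|y_s|^p)\,\mathrm{d}s\right)^p,
$$
where the coefficient $\tfrac12$ is genuinely small and absorbs cleanly, and then converts the last term by H\"older,
$$
\left(\int_{t}^{T}\mu_{s}\phi^{1/p}(|y_s|^p)\,\mathrm{d}s\right)^p\le\left(\int_t^T\mu_s\,\mathrm{d}s\right)^{p-1}\int_{t}^{T}\mu_{s}\phi(|y_s|^p)\,\mathrm{d}s\le M^{p-1}\int_{t}^{T}\mu_{s}\phi(|y_s|^p)\,\mathrm{d}s,
$$
which is exactly how the concave $\phi$ (rather than $\phi^{1/p}$ or a linear term) lands in the final bound. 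The rest of your plan --- the BDG bookkeeping, the absorption of the genuinely small $\varepsilon$- and $\tfrac12$-multiples using $y_\cdot\in\s^p$, and the passage from $\int|y_s|^{p-2}|z_s|^2\,\dd s$ to $(\int|z_s|^2\,\dd s)^{p/2}$ --- is sound, but the step above must be repaired before the claimed constant $C_{p,M}$ can be extracted.
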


\begin{proof}
For each $n\geq1$, we introduce the following $(\F_t)$-measurable stopping time:
$$\tau_{n}:=\inf \left\{t \in[0, T]: \int_{0}^{t}|z_{s}|^{2} {\rm d}s \geq n\right\} \wedge T.$$
Applying It\^{o}'s formula to $\left|y_{t}\right|^{2}$ yields that for each $n \geq 1$ and $t\in [0, T]$,
\begin{align}\label{eq:4.21}
\begin{split}
\left|y_{t \wedge \tau_{n}}\right|^{2}+\int_{t \wedge \tau_{n}}^{\tau_{n}}\left|z_{s}\right|^{2}{\rm d}s
=\left|y_{\tau_{n}}\right|^{2}+2 \int_{t \wedge \tau_{n}}^{\tau_{n}}\left\langle y_{s}, g\left(s, y_{s}, z_{s}\right)\right\rangle {\rm d}s-2 \int_{t \wedge \tau_{n}}^{\tau_{n}}\left\langle y_{s}, z_{s} {\rm d} B_{s}\right\rangle.
\end{split}
\end{align}
It follows from assumption (A3) that $\as$,
\begin{align}\label{eq:4.22}
\begin{split}
2\left\langle y_{t},\ g\left(t, y_{t}, z_{t}\right)\right\rangle \leq 2|y_t|\mu_t \phi^{{1\over p}}(|y_t|^p)+2\lambda_t^2 |y_t|^{2}+\frac{\left|z_{t}\right|^{2}}{2}+2\left|y_{t}\right| f_{t}.
\end{split}
\end{align}
By \eqref{eq:4.21} and \eqref{eq:4.22} we obtain that there exists a constant $c_{p}>0$ depending only on $p$ such that
\begin{align}\label{eq:4.23}
\begin{split}
\left(\int_{t\wedge\tau_n}^{\tau_n}|z_s|^2{\rm d}s\right)^\frac{p}{2}\leq& c_p\left(1+\int_{t\wedge\tau_n}^{\tau_n} \lambda_s^2\  {\rm d}s\right)^\frac{p}{2}\sup_{s\in[t\wedge\tau_n,\tau_n]}|y_s|^p
+c_p\left(\int_{t\wedge\tau_n}^{\tau_n}\mu_s \phi^{{1\over p}}(|y_s|^p) {\rm d}s\right)^p\\
&+c_p\left(\int_{t\wedge\tau_n}^{\tau_n}f_s{\rm d}s\right)^p+c_p\left|\int_{t\wedge\tau_n}^{\tau_n}\left<y_s,z_s{\rm d}B_s\right>\right|^\frac{p}{2}, \ \ \ t\in[0,T].
\end{split}
\end{align}
Furthermore, the Burkholder-Davis-Gundy (BDG) inequality yields that there exists a constant $d_p>0$ depending only on $p$ such that for each $0\leq u \leq t\leq T$,
\begin{align*}
\begin{split}
c_p\E\left[\left|\int_{t\wedge\tau_n}^{\tau_n}\left<y_s,z_s{\rm d}B_s\right>\right|^\frac{p}{2}\bigg|\F_u\right]&\leq d_p\E\left[\left(\int_{t\wedge\tau_n}^{\tau_n}|y_s|^2|z_s|^2{\rm d}s\right)^{\frac{p}{4}}\bigg|\F_u\right]\\
&\leq{\frac{{d^2_p}}{2}}\E\left[\sup_{s\in[t\wedge\tau_n,\tau_n]}
|y_s|^p\bigg|\F_u\right]+\frac{1}{2}
\E\left[\left(\int_{t\wedge\tau_n}^{\tau_n}|z_s|^2{\rm d}s\right)^{\frac{p}{2}}\bigg|\F_u\right].
\end{split}
\end{align*}
Thus, in light of the last inequality, taking the conditional mathematical expectation with respect to $\F_u$ in both sides of \eqref{eq:4.23} and letting $n\rightarrow +\infty$, by Fatou's lemma and Lebesgue's dominated convergence theorem we can conclude that $z_\cdot\in\M^p(0,T;\R^{k\times d})$ and there exists a constant $\bar C_{p,M}>0$ depending only on $(p,M)$ such that for each $0\leq u \leq t\leq T$,
$$
\E\left[\left(\Dis \int_t^T
|z_s|^2 {\rm d}s\right)^\frac{p}{2}\bigg|\F_u\right]
\leq \bar C_{p,M} \mathbb{E}\left[\sup\limits_{s\in[t,T]}\left|y_{s}\right|^{p}
+\left(\int_{t}^{T}\mu_{s}\phi^{{1\over p}}(|y_s|^p) \mathrm{d}s\right)^p + \left(\int_{t}^{T} f_{s}\mathrm{d}s\right)^{p}\bigg|{\F}_{u}\right].
$$
On the other hand, in light of (A3), by an identical way with Lemma 2.6 in \citet{LiFan2023} we can prove that there exists a constant $\tilde C_{p,M}>0$ depending only on $p$ and $M$ such that for each $0\leq u \leq t\leq T$,
$$
\mathbb{E}\left[\sup _{s \in[t, T]}\left|y_{s}\right|^{p} \bigg|{\F}_{u}\right] \leq \tilde C_{p, M} \mathbb{E}\left[|\xi|^{p}+\int_{t}^{T} \mu_{s} |y_s|^{p-1}\phi^{{1\over p}}(|y_s|^p) \mathrm{d} s+\left(\int_{t}^{T} f_{s} \mathrm{ d} s\right)^{p} \bigg|{\F}_{u}\right].
$$
Finally, by Young's inequality and H\"{o}lder's inequality we can deduce that there exists a positive constant $\hat C_{p,M}$ depending only on $p$ and $M$ such that for each $t\in \T$,
$$
\tilde C_{p, M} \int_{t}^{T} \mu_{s} |y_s|^{p-1}\phi^{{1\over p}}(|y_s|^p) \mathrm{d} s\leq \frac{1}{2}\sup _{s \in[t, T]}\left|y_{s}\right|^{p} +\hat C_{p, M}\left(\int_{t}^{T}\mu_{s}\phi^{{1\over p}}(|y_s|^p) \mathrm{d}s\right)^p
$$
and
$$
\left(\int_{t}^{T}\mu_{s}\phi^{{1\over p}}(|y_s|^p) \mathrm{d}s\right)^p\leq
\left(\int_{t}^{T}\mu_{s}\mathrm{d}s\right)^{p-1}
\int_{t}^{T}\mu_{s}\phi(|y_s|^p) \mathrm{d}s\leq M^{p-1} \int_{t}^{T}\mu_{s}\phi(|y_s|^p) \mathrm{d}s.\vspace{0.1cm}
$$
Then, the desired assertion in \cref{pro:4.7} follows immediately from the last four inequalities.\vspace{0.1cm}
\end{proof}

Now, we are at a position to prove the general existence of the $L^1$ solution in \cref{th}. The idea of the proof mainly comes from Theorem 2 of \citet{Fan18}. In comparison with Theorem 2 of \citet{Fan18}, managing to verify the condition \eqref{eq:2.2} of \cref{pro:2.4} is one of the main difficulties in our proof when utilizing the stochastic Bihari-type inequality. Moreover, inspired by the proof of Theorem 3.1 in \citet{LiuLiFan2020}, a technique utilizing stopping times to subdivide the interval $\T$ is also applied to obtain the desired estimate on the underlying subintervals.

\begin{proof}[Proof of the existence part of \cref{th}]
According to Picard's iteration method, we set $(y_\cdot^0,z_\cdot^0):=(0,0)$ and define the sequence $\{(y_\cdot^n,z_\cdot^n)\}_{n=1}^{+\infty}$ by the $L^1$ solution of the following BSDE:
\begin{equation}\label{18}
y^n_t=\xi+\int^{T}_{t}g(s,y^n_s,z^{n-1}_s)\,\dd s-\int^{T}_{t}z_s^n\,\dd B_s, \ \ t\in[0,T].
\end{equation}
Since assumptions (H1)-(H5) hold for $g$ and $\xi$, it is not very hard to verify that for each integer $n\geq 1$ and process $z_\cdot^{n-1}\in \bigcap_{\beta\in (0,1)}\M^\beta$, the generator $g(t,y,z_t^{n-1})$ satisfies assumptions (H1), (H2) and (H5). Then, by \cref{pro:4.3} we can conclude that for each $n\geq 1$, BSDE (\ref{18}) admits an adapted solution $(y_t^n,z_t^n)_{t\in[0,T]}\in\bigcap_{\beta\in(0,1)}\s^\beta\times \M^\beta$ such that
\begin{equation}\label{eq:4.24}
(y_t^n)_{t\in[0,T]}\ {\rm belongs\ to\ class}\ (D)\ {\rm and}\ \E\left[\int_0^T u_t|y^n_t| \dd t\right]<+\infty.
\end{equation}

In the sequel, let us talk about the convergence of the sequence
$\{(y_\cdot^n,z_\cdot^n)\}_{n=1}^{+\infty}$. To begin with, for each $n,i\geq 1$, we set
$$\hat y_\cdot^{n,i}:=y_\cdot^{n+i}-y_\cdot^n\ \ \ {\rm and}\ \ \ \hat z_\cdot^{n,i}:=z_\cdot^{n+i}-z_\cdot^n.$$ Then, it is straightforward to verify that $(\hat y_t^{n,i},\hat z_t^{n,i})_{t\in[0,T]}$ solves the following BSDE:
\begin{equation}\label{33}
\hat y^{n,i}_t=\int^{T}_{t}\bar g^{n,i}(s,\hat y^{n,i}_s)\dd s-\int^{T}_{t}\hat z_s^{n,i}\dd B_s, \ \ t\in[0,T],
\end{equation}
where $\as$, for each $n,i\geq 1$ and $y\in\R^k$,
$$\bar g^{n,i}(t,y):=g(t,y+y_t^n,z_t^{n+i-1})-g(t,y_t^n,z_t^{n-1}).\vspace{0.1cm}$$

The following proof is divided into two steps.\vspace{0.2cm}

\textbf{First step.} We prove that for each $n,i\geq 1$,  $\hat{y}^{n,i}_\cdot\in\s^\frac{\beta}{\alpha}$ for each $\beta\in (\alpha,1)$.\vspace{0.1cm}

Let us fix $n,i\geq 1$ arbitrarily. For each $m\geq1$, denote the following stopping time:
$$
\sigma_m:=\inf\left\{t\in[0,T]:\int^t_{0}(|z_s^{n+i-1}|^2+|z_s^{n-1}|^2 )
\,\mathrm{d} s\geq m\right\}\wedge T.
$$
It follows from Corollary 2.3 in \citet{Bri03} and assumptions (H1) and (H4) that for each $m\geq 1$,
\begin{equation}\label{104}
|\hat{y}^{n,i}_{t\wedge\sigma_m}|\leq\E\left[|\hat{y}^{n,i}_{\sigma_m}|
+\left.\int^{\sigma_m}_{t\wedge\sigma_m} u_s\rho(|\hat{y}^{n,i}_s|)\ \mathrm{d}s\right|\F_t\right]+G^{n,i}(t), \ \ t\in[0,T],
\end{equation}
where
$$
G^{n,i}(t):=2\E\left[\left.\int^{T}_{0}\gamma_s\left(g^1_s+g^2_s+
|y^n_s|+|z^{n-1}_s|+|z^{n+i-1}_s|\right)^\alpha\ \mathrm{d}s\right|\F_t\right].\vspace{0.1cm}
$$
In light of \eqref{eq:4.1} and \eqref{eq:4.24}, by taking limit with respect to $m$ on  both sides of (\ref{104}) and using the Lebesgue's dominated convergence theorem we get that
\begin{equation}\label{105}
|\hat{y}^{n,i}_t|\leq G^{n,i}(t)+\E\left[\left.\int_t^T u_s \rho(|\hat{y}^{n,i}_s|)\ \mathrm{d}s\right|\F_t\right], \   \  t\in [0,T].
\end{equation}
Next, we consider the integrability of $G^{n,i}(\cdot)$. In light of $\beta\in (\alpha, 1)$, it follows from Doob's maximum inequality on submartingales that there exists a constant $C_\beta^\alpha>0$ depending only on $\beta$ and $\alpha$ such that
$$
\E\left[\supt|G^{n,i}(t)|^{\frac{\beta}{\alpha}}\right] \leq C_\beta^\alpha\E\left[\left(\int_0^T \gamma_s\left(g^1_s+g^2_s+
|y^n_s|+|z^{n-1}_s|+|z^{n+i-1}_s|\right)^\alpha\ \mathrm{d}s\right)^{\frac{\beta}{\alpha}}\right].
$$
By H\"{o}lder inequality we have
$$
\E\left[\left(\int_0^T \gamma_s (g^1_s)^\alpha \dd s\right)^{\frac{\beta}{\alpha}}\right] \leq \left\|\int_0^T \gamma_s^{\frac{1}{1-\alpha}}\dd s\right\|_\infty^{\frac{\beta(1-\alpha)}{\alpha}}\E\left[\left(\Dis \int_0^T g^1_s \dd s\right)^\beta\right],
$$
$$
\E\left[\left(\int_0^T \gamma_s (g^2_s)^\alpha \dd s\right)^{\frac{\beta}{\alpha}}\right] \leq \left\|\int_0^T \gamma_s\dd s\right\|_\infty^{\frac{\beta}{\alpha}}\E\left[\sup\limits_{s\in[0,T]} |g^2_s|^{\beta} \right],
$$
$$
\E\left[\left(\int_0^T \gamma_s |z^{n-1}_s|^\alpha \dd s\right)^{\frac{\beta}{\alpha}}\right] \leq \left\|\int_0^T \gamma_s^{\frac{2}{2-\alpha}}\dd s\right\|_\infty^{\frac{\beta(2-\alpha)}{2\alpha}}\E\left[\left(\Dis \int_0^T |z^{n-1}_s|^2 \dd s\right)^{\frac{\beta}{2}}\right],
$$
and $z^{n+i-1}_\cdot$ has a similar estimate. In addition,
$$
\E\left[\left(\int_0^T \gamma_s|y^{n}_s|^\alpha \dd s\right)^{\frac{\beta}{\alpha}}\right] \leq \left\|\int_0^T \gamma_s \dd s\right\|_\infty^{\frac{\beta}{\alpha}}\E\left[\supt|y^{n}_s|^\beta\right].\vspace{0.1cm}
$$
Note that $y^{n}_\cdot$ belongs to $\s^\beta$, $z^{n-1}_\cdot$ and $z^{n+i-1}_\cdot$ belong to $\M^\beta$, $\E\big[\int_0^Tg^1_t\ {\rm d}t\big]<+\infty$ and $\E\big[\supt |g^2_t|\big]<+\infty$. By the previous inequalities along with \eqref{eq:3.1} and \eqref{eq:3.3} we have for  $\beta\in (\alpha,1)$,
\begin{equation}\label{107}
G^{n,i}(\cdot)\in \s^\frac{\beta}{\alpha}.
\end{equation}
On the other hand, it follows from \eqref{eq:4.1} and (\ref{105}) that
$$
|\hat{y}^{n,i}_t|\leq\E\left[\eta_{n,i}+\left.\int_t^T Au_s |\hat{y}^{n,i}_s|\ \mathrm{d}s\right|\F_t\right], \ \ t\in[0,T],
$$
where
$$
\eta_{n,i}:=2\int^{T}_{0}\gamma_s\left(g^1_s+g^2_s+|y^n_s|+|z^{n-1}_s|+|z^{n+i-1}_s|\right)^\alpha\ \mathrm{d}s+A\int_0^T u_s \dd s.\vspace{0.1cm}
$$
In light of \eqref{eq:4.24}, applying the stochastic Gronwall-type inequality (see \cref{pro:2.3}) yields that
\begin{equation}\label{110}
 |\hat{y}^{n,i}_t| \leq e^{A\|\int_0^Tu_s \dd s\|_\infty}\left(A\left\|\int_0^T u_s\dd s\right\|_\infty+G^{n,i}(t)\right)\leq e^{AM}\left( AM+G^{n,i}(t)\right), \ \ t\in[0,T].
\end{equation}
Combining (\ref{107}) and (\ref{110}), we can draw the conclusion that for each $n,i\geq 1$ and $\beta\in (\alpha,1)$,
$$
|\hat y_\cdot^{n,i}|\in \s^\frac{\beta}{\alpha}.
$$

\textbf{Second step:} \ \ We complete the proof by distinguishing two cases: $\alpha\in[0,1/2)$ and $\alpha\in[1/2,1)$.\vspace{0.2cm}

{\bf Case (i)}: $\alpha \in [1/2,1)$. By taking $\beta=\alpha p$ with $p\in (1,\frac{1}{\alpha}\wedge\overline{p})\subset(1,2)$, where $\bar{p}>1$ is defined in (ii) of assumption (H4), we have for each $n,i\geq 1$,
\begin{equation}\label{eq:4.30}
\hat y_\cdot^{n,i}\in \s^p(0,T;\R^k).
\end{equation}
By assumptions (H1) and (H4) along with \cref{pro:4.1}, we can deduce that the generator $g$ satisfies the assumption (H1a)$_2$ with a linearly growing function $\kappa(\cdot)\in {\bf S}$ and the assumption (H1b)$_p$ with a linearly growing function $\varrho(\cdot)\in {\bf S}$, which along with (H3) and \eqref{eq:4.1} yields that for each $n,i\geq 1$, the generator $\bar{g}^{n,i}$ satisfies the assumption (A1) with
$$
\mu_\cdot:=Au_\cdot, \ \ \lambda_\cdot:\equiv0, \ \ f_\cdot:=2\gamma_\cdot\, (g^1_\cdot+g^2_\cdot+|y^n_\cdot|+|z^{n-1}_\cdot|+|z^{n+i-1}_\cdot|)^\alpha, \ \ \varphi_\cdot:=Au_\cdot,
$$
and the assumption (A3) with
$$
\mu_\cdot:=\lambda_\cdot, \ \ \phi(\cdot):=\varrho(\cdot), \ \ \lambda_\cdot:\equiv0, \ \ f_\cdot:=v_\cdot\, |\hat z^{n-1,i}_\cdot|.
$$
Then, by \cref{pro:4.2,pro:4.7} along with \eqref{107}, \eqref{eq:4.30} and H\"{o}lder's inequality we can deduce that for each $n\geq 2, i\geq1$, $\hat{z}_\cdot^{n,i}\in\M^p(0,T;\R^{k\times d})$ and there exists a constant $C_{p,M}>0$ depending only on $(p,M)$ such that for each $t\in\T$ and each pair of stopping times $\sigma_1$ and $\sigma_2$ satisfying $0\leq\sigma_1\leq\sigma_2\leq T$,
\begin{equation}\label{3.26}
\begin{array}{l}
\Dis \E\left[\sup\limits_{s\in [t,T]}|\tilde
		y^{n,i}_s|^p\bigg|\F_t\right]+\E\left[\left(\int_t^T |\tilde z^{n,i}_s|^2 \dd s\right)^{p\over 2}\bigg|\F_t\right]\vspace{0.2cm}\\
\ \ \leq \Dis C_{p,M}\E\left[|\tilde y^{n,i}_{\sigma_2}|^p+\int_t^Tu_s\varrho\left(|\tilde y^{n,i}_s|^p \right)\dd s+\left(\int_t^T \tilde v_s^2 \dd s\right)^{p\over 2}\left(\int_t^T |\tilde z^{n-1,i}_s|^2 \dd s\right)^{p\over 2}\bigg|\F_t\right],
	\end{array}
\end{equation}
where
$$
\tilde{y}_t^{n,i}:=\mathbbm{1}_{\sigma_1\leq t }\hat{y}_{t\wedge \sigma_2}^{n,i}, \ \ \ \tilde{z}_t^{n,i}:=\mathbbm{1}_{\sigma_1\leq t \leq\sigma_2}\hat{z}_t^{n,i} \ \ \ \text{and}\ \ \  \tilde{v}_t:=\mathbbm{1}_{\sigma_1\leq s \leq\sigma_2}v_t.\vspace{0.1cm}	
$$
Now, let us fix a positive integer $N$ such that
\begin{equation}\label{24}
\frac{K}{N} \leq \frac{1}{4\bar C},
\end{equation}
where
$$
K:=\left\|\int_0^T u_s \dd s\right\|_\infty+\left\|\int_0^T v_s^2 \dd s\right\|_\infty^{p\over 2}\leq M+M^{\frac{p}{2}}
$$
and
$$
\bar C:=C_{p,M}e^{AC_{p,M}\|\int_0^Tu_s\dd s\|_\infty}\leq C_{p,M}e^{AMC_{p,M}}.
$$
In the sequel, by virtue of $(\F_t)$-measurable stopping times we divide the time interval $[0,T]$ into some small subintervals $[T_{j-1},T_j],\  j=1,2,\ldots N$ as follows: $T_{0}:=0$,
$$
\begin{aligned}
&T_{1}:=\inf \left\{t \geq 0: \left(\int_{0}^{t}v_s^{2}  \dd s\right)^{p\over 2} \geq \frac{K}{N}\right\} \wedge T, \\
&\quad \vdots \\
&T_{j}:=\inf \left\{t \geq T_{j-1}: \left(\int_{0}^{t}v_s^{2}  \dd s\right)^{p\over 2} \geq \frac{j K}{N}\right\} \wedge T, \\
&\quad \vdots \\
&T_{N}:=\inf \left\{t \geq T_{N-1}: \left(\int_{0}^{t}v_s^{2}  \dd s\right)^{p\over 2} \geq \frac{N K}{N}\right\} \wedge T=T.\vspace{0.1cm}
\end{aligned}
$$
Thus, for each $[T_{j-1},T_{j}]\subset[0,T]$ , $j=1,2,\ldots N$, we have, in view of (\ref{24}),
\begin{equation}\label{25}
\bar C\left(\int_0^T \mathbbm{1}_{T_{j-1}\leq s \leq T_{j}}v_s^{2} \dd s\right)^{p\over 2}\leq\frac{\bar CK}{N}\leq\frac{1}{4}.
\end{equation}
Now, let $\sigma_1=T_{N-1}, \sigma_2=T_{N}=T$ in (\ref{3.26}). In light of $\hat y^{n,i}_T=0,\ \hat y_\cdot^{n,i}\in\s^p$ and $\hat z_\cdot^{n-1,i}\in\M^p$ along with (\ref{3.26}) and (\ref{25}), by the stochastic Gronwall-type inequality (see \cref{pro:2.3}) we can deduce that for each $n\geq 2,\ i\geq 1$ and $t\in\T$,
\begin{equation}\label{34}
\begin{array}{l}
\E\left[\sup\limits_{s\in[t,T]}|\tilde{y}_s^{n,i}|^p+\left(\Dis \int_t^T|\tilde{z}_s^{n,i} |^2 \dd s\right)^{p\over 2}\bigg|\F_t\right]\\
\ \ \leq \bar CA\left\|\Dis \int_0^Tu_s\dd s\right\|_\infty+\bar C\E\left[\left(\Dis \int_0^T \tilde v^2_s\dd s\right)^{p\over 2}\left(\Dis \int_0^T |\tilde{z}^{n-1,i}_s|^2\dd s\right)^{p\over 2}\bigg|\F_t\right]\vspace{0.1cm}\\
\ \ \leq \bar CAK + \Dis \frac{1}{4}\E\left[\left(\Dis \int_0^T |\tilde{z}^{n-1,i}_s|^2\dd s\right)^{p\over 2}\bigg|\F_t\right].
\end{array}
\end{equation}
Furthermore, by taking supremum with respect to $i$ and $n$ on both sides of (\ref{34}) and using a induction similar to that in \citet[p.1877-1878,1880]{Fan18}, we can derive the following inequality
\begin{equation}\label{26}
\begin{array}{lll}
\sup\limits_{n\geq2}\supi|
 \tilde y^{n,i}_t|^p & \leq & \sup\limits_{n\geq2}\supi\E\left[\sup\limits_{s\in [t,T]}|
 \tilde y^{n,i}_s|^p+\left(\Dis \int_t^T |\tilde{z}^{n-1,i}_s|^2\dd s\right)^{p\over 2} \dd s\bigg|\F_t \right]\\
& \leq & \Dis \bar CAK+\frac{1}{4}\sup_{n\geq 2}\sup_{i\geq 1}\E\left[\left(\Dis \int_0^T |\tilde{z}^{n-1,i}_s|^2\dd s\right)^{p\over 2}\bigg|\F_t\right]\vspace{0.1cm}\\
& \leq & \Dis 3\bar CAK+\E\left[\left(\int_0^T |z^2_s-z^1_s|^2\dd s\right)^{p\over 2}\bigg|\F_t\right]:=J_t,\ \ \ t\in[0,T].
\end{array}
\end{equation}
It then follows from Fubini's theorem that\vspace{0.1cm}
\begin{equation}\label{35}
\begin{array}{l}
\E\left[\Dis \int_0^T u_s\, \sup\limits_{n\geq2}\supi|
 \tilde y^{n,i}_s|^p\dd s\right]\leq \E\left[\Dis \int_0^T u_s J_s\dd s\right]\vspace{0.2cm}\\
\ \ \leq 3\bar CAK\left\|\Dis \int_0^Tu_s \dd s \right\|_{\infty}+\Dis\E\left[\Dis \int_0^Tu_s\E\left[\Dis \left(\int_0^T |z^2_s-z^1_t|^2\dd t\right)^{p\over 2}\bigg|\F_s\right]\dd s\right]\vspace{0.2cm}\\
\ \ \leq 3\bar CK^2A + \Dis K\E\left[\left(\Dis \int_0^T |z^2_s-z^1_s|^2\dd s\right)^{p\over 2}\right]<+\infty.
\end{array}
\end{equation}
Thus, taking first supremum with respect to $i$ then superlimit with respect to $n$ on both sides of (\ref{3.26}), by (\ref{25}), Fatou's lemma and the monotonicity and continuity of $\varrho(\cdot)$ we can deduce that
$$
\begin{array}{lll}
\varlimsup\limits_{n\rightarrow\infty}\supi|\tilde y_t^{n,i}|^p
&\leq &\Dis \varlimsup\limits_{n\rightarrow\infty}\supi\E\left[\sup\limits_{s\in[t,T]}|\tilde y_s^{n,i}|^p+\Dis \frac{3}{4}\left(\int_t^T|\tilde z_s^{n,i}|^2 \dd s\right)^{p\over 2}\bigg|\F_t\right]\vspace{0.1cm}\\
&\leq & \Dis C_{p,M}\E\left[\int_t^T u_s\varrho\left(\varlimsup\limits_{n\rightarrow\infty}\supi|\tilde y^{n,i}_s|^p \right)\dd s\bigg|\F_t\right],\ \ \ t\in\T.\vspace{0.1cm}
\end{array}
$$
And, in light of \eqref{35}, by the stochastic Bihari-type inequality (see \cref{pro:2.4}) we get
\begin{equation}\label{37}
\varlimsup\limits_{n\rightarrow\infty}\supi|\tilde y_t^{n,i}|^p=0, \ \ t\in[0,T].
\end{equation}
Furthermore, in light of (\ref{35}), (\ref{37}) and the fact that $\varrho(\cdot)$ is a continuity function with $\varrho(0)=0$, by Lebesgue's dominated convergence theorem we can obtain that for each $t\in\T$,
$$
\begin{array}{l}
\varlimsup\limits_{n\rightarrow\infty}\supi\E\left[\sup\limits_{s\in[t,T]}|\tilde y_s^{n,i}|^p+\Dis \frac{3}{4}\left(\Dis \int_t^T|\tilde z_s^{n,i}|^2 \dd s\right)^{p\over 2}\right]\vspace{0.1cm}\\
\ \ \leq \Dis C_{p,M}\E\left[\int_t^T u_s\varrho\left(\varlimsup\limits_{n\rightarrow\infty}\supi|\tilde y^{n,i}_s|^p \right)\dd s\right]=0,
\end{array}
$$
which means that
\begin{equation}\label{3.35}
	\lim_{n\rightarrow\infty}\supi\E\left[\sup\limits_{s\in[t,T]}|\hat y_{s\wedge T}^{n,i}\mathbbm{1}_{T_{N-1}\leq s}|^p+\left(\Dis \int_t^T|\hat z_s^{n,i}|^2\mathbbm{1}_{T_{N-1}\leq s \leq T} \dd s\right)^{p\over 2}\right]=0.\vspace{0.1cm}
\end{equation}

In the sequel, let $\sigma_1=T_{N-2}, \sigma_2=T_{N-1}$ in (\ref{3.26}). In light of (\ref{25}) and (\ref{26}), by applying the stochastic Gronwall-type inequality to (\ref{3.26}) we can deduce that for each $n\geq2$, $i\geq1$ and $t\in[0,T]$,
$$
\E\left[\sup\limits_{s\in[t,T]}|\tilde{y}_s^{n,i}|^p+\left(\Dis \int_t^T|\tilde{z}_s^{n,i} |^2 \dd s\right)^{p\over 2}\bigg|\F_t\right] \leq  \bar C(AK+J_t)+ \Dis \frac{1}{4}\E\left[\left(\Dis \int_0^T |\tilde{z}^{n-1,i}_s|^2\dd s\right)^{p\over 2}\bigg|\F_t\right],
$$
where
$$
\tilde{y}_t^{n,i}:=\mathbbm{1}_{T_{N-2}\leq t }\hat{y}_{t\wedge T_{N-1}}^{n,i},\ \  \tilde{z}_t^{n,i}:=\mathbbm{1}_{T_{N-2}\leq t \leq T_{N-1}}\hat{z}_t^{n,i}\ \  \text{and}\ \ \tilde{v}_t:=\mathbbm{1}_{T_{N-2}\leq t \leq T_{N-1}}v_t.\vspace{0.1cm}
$$
Thus, with the help of (\ref{3.35}), an induction similar to (\ref{26})-(\ref{3.35}) yields that for each $t\in[0,T]$,
$$
\lim_{n\rightarrow\infty}\supi\E\left[\sup\limits_{s\in[t,T]}|\hat y_{s\wedge T_{N-1}}^{n,i}\mathbbm{1}_{T_{N-2}\leq t}|^p+\left(\Dis \int_t^T|\hat z_s^{n,i}|^2\mathbbm{1}_{T_{N-2}\leq t \leq T_{N-1}} \dd s\right)^{p\over 2}\right]=0.\vspace{0.1cm}
$$
Furthermore, by proceeding the above process we can verify that for any $j=1,...,N$ and $t\in[0,T]$,
$$
\lim_{n\rightarrow\infty}\supi\E\left[\sup\limits_{s\in[t,T]}|\hat y_{s\wedge T_{N-j}}^{n,i}\mathbbm{1}_{T_{N-j+1}\leq t}|^p+\left(\Dis \int_t^T|\hat z_s^{n,i}|^2\mathbbm{1}_{T_{N-j+1}\leq t \leq T_{N-j}} \dd s\right)^{p\over 2}\right]=0,
$$
which means that $(y^n_t-y^1_t, z^n_t-z^1_t)_{t\in\T}$ is a Cauchy sequence in ${\s}^p(0,T;\R^k)\times {\M}^p(0,T;\R^{k\times d})$. By  $(Y_\cdot,Z_\cdot)$ we denote the limit of $(y^n_\cdot-y^1_\cdot,z^n_\cdot-z^1_\cdot)$ in the space ${\s}^p\times {\M}^p$, then
\begin{equation}\label{37*}
\limn\E\left[\sup_{t\in [0,T]}|(y^n_t-y^1_t)-Y_t|^p+\left(\int_{0}^T
|(z^n_t-z^1_t)-Z_t|^2 \dd t\right)^{\frac{p}{2}}\right]=0.	
\end{equation}
And, since $\E\left[\int_0^T u_t|y_t^1|\dd t\right]<+\infty$ and $Y_\cdot\in{\s}^p$, we have
\begin{equation}\label{30}
\E\left[\int_0^Tu_t|y_t^1+Y_t|
\dd t\right]
\leq\E\left[\int_0^Tu_t|y_t^1|\dd t\right]+M\E\left[\supt|Y_t|\right]<+\infty.\vspace{0.1cm}
\end{equation}
Finally, note that $(y^1_\cdot,z^1_\cdot)\in \s^\beta\times\M^\beta$ for each $\beta\in (0,1)$ and $y^1_\cdot$ belongs to class $(D)$. In light of (\ref{37*}), (\ref{30}) and assumptions (H2) and (H3), by taking limit with respect to $n$ on both sides of BSDE (\ref{18}) and using Lebesgue's dominated convergence theorem we conclude that $(y_t,z_t)_{t\in[0,T]}:=(Y_t+y^1_t,Z_t+z^1_t)_{t\in[0,T]}$ is the desired $L^1$ solution to BSDE $(\xi,T,g)$ satisfying (\ref{eq:3.4}). \vspace{0.2cm}

{\bf Case (ii)}: $\alpha\in[0,1/2)$. Take $\beta=2\alpha\in (0,1)$, then for each $n,i\geq1$, we have
\begin{equation}\label{32}
	\hat y_\cdot^{n,i}\in \s^2(0,T;\R^k).
\end{equation}
By (H1) and \cref{pro:4.1} we know that the generator $g$ satisfies (H1a)$_2$ with a linearly growing function $\kappa(\cdot)\in\s$. Then, by virtue of assumptions (H1a)$_2$ and (H4) along with \eqref{eq:4.1} and \eqref{107}, it is not hard to verify that the generator $\bar{g}^{n,i}$ satisfies the assumption (A1) with
$$p:=2, \ \ \mu_\cdot:=Au_\cdot, \ f_\cdot:=2\gamma_\cdot\, (g^1_\cdot+g^2_\cdot+|y^n_\cdot|
+|z^{n-1}_\cdot|+|z^{n+i-1}_\cdot|)^\alpha, \ \ \lambda_\cdot:\equiv0, \ \ \varphi_\cdot:=Au_\cdot.$$
It then follows from \cref{pro:4.2} along with \eqref{32} and \eqref{107} that $\hat z_\cdot^{n,i}\in \M^2(0,T;\R^{k\times d})$ such that $(\hat y_\cdot^{n,i},\hat z_\cdot^{n,i})$ is an $L^2$ solution of BSDE (\ref{33}). On the other hand, by (H1a)$_2$, (H3) and the definition of $\bar g^{n,i}$, it can also be verified that the generator $\bar g^{n,i}$ satisfies the assumption (A2) with
$$
p:=2, \ \ \mu_\cdot:=u_\cdot, \ \ \psi(\cdot):=\kappa(\cdot), \ \ \lambda_\cdot:\equiv 0, \  \ f_\cdot:=v_\cdot|\hat z^{n-1,i}_\cdot|.
$$
Thus, by \cref{pro:4.6} and H\"{o}lder's inequality we can deduce that there exists a constant $C_M>0$ depending only on $M$ such that for each $n\geq 2$, $i\geq1$, $t\in\T$ and any pair of $(\F_t)$-measurable stopping times $\sigma_1$ and $\sigma_2$ satisfying $0\leq\sigma_1\leq\sigma_2\leq T$, we have
\begin{equation}\label{23}
\begin{array}{l}
\E\left[\sup\limits_{s\in[t,T]}|\tilde{y}_s^{n,i}|^2\bigg|\F_t\right]+\E\left[\Dis \int_t^T|\tilde{z}_s^{n,i} |^2 \dd s\bigg|\F_t\right]\vspace{0.2cm}\\
\ \ \leq \Dis C_M\E\left[|\tilde{y}_ {\sigma_2}^{n,i}|^2+
\int_t^T u_s\kappa\left(|\tilde{y}^{n,i}_s|^2 \right)\dd s+\int_0^T \tilde{v}_s^2 \dd s \int_t^T |\tilde{z}^{n-1,i}_s|^2\dd s\bigg|\F_t\right],\vspace{-0.1cm}
\end{array}
\end{equation}
where
$$
\tilde{y}_t^{n,i}:=\mathbbm{1}_{\sigma_1\leq t }\hat{y}_{t\wedge \sigma_2}^{n,i},\ \ \ \tilde{z}_t^{n,i}:=\mathbbm{1}_{\sigma_1\leq t \leq\sigma_2}\hat{z}_t^{n,i}\ \ \  {\text and}\ \ \  \tilde{v}_t:=\mathbbm{1}_{\sigma_1\leq s \leq\sigma_2}v_t.\vspace{0.1cm}
$$
With (\ref{23}) in hands and in light of \eqref{eq:4.1}, by an identical argument with (\ref{24})-(\ref{30}), we can verify that BSDE $(\xi,T,g)$ admits an $L^1$ solution $(y_t,z_t)_{t\in\T}$ satisfying (\ref{eq:3.4}). The proof of the existence part of \cref{th} is then complete.
\end{proof}


\subsection{Proof of the uniqueness part of \cref{th}\vspace{0.1cm}}

In this subsection, we will prove the following \cref{pro:4.8} whose a direct consequence is the uniqueness part of \cref{th}. The whole idea comes from Theorem 1 in \citet{Fan18}. The main difference is that the additional condition (\ref{eq:3.4}) is required in \cref{pro:4.8} for the uniqueness of the $L^1$ solution of BSDE (\ref{101}). This condition enable us to apply smoothly the stochastic Gronwall-type inequality.

\begin{pro}\label{pro:4.8}
Let assumptions (H1), (H3), (i) of (H4) and (H5) hold. Then, BSDE (\ref{101}) admits at most one solution $(y_\cdot,z_\cdot)\in \bigcup_{\beta\in(\alpha,1)}\s^\beta\times\M^\beta$ such that $y_\cdot$ belongs to class $(D)$ and satisfies (\ref{eq:3.4}).
\end{pro}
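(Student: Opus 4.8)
\emph{Setup and reduction.} The plan is to show that if $(y^1_\cdot,z^1_\cdot)$ and $(y^2_\cdot,z^2_\cdot)$ are two solutions of BSDE \eqref{101} lying in $\s^\beta\times\M^\beta$ for some $\beta\in(\alpha,1)$, with $y^1_\cdot,y^2_\cdot$ of class $(D)$ and satisfying \eqref{eq:3.4}, then $\hat y_\cdot:=y^1_\cdot-y^2_\cdot\equiv 0$, whence also $\hat z_\cdot:=z^1_\cdot-z^2_\cdot\equiv 0$. Note that $(\hat y_\cdot,\hat z_\cdot)$ solves the BSDE with terminal value $0$ and generator $\hat g(s,y,z):=g(s,y+y^2_s,z+z^2_s)-g(s,y^2_s,z^2_s)$, that $\hat y_\cdot$ is of class $(D)$, and that $\E[\int_0^T u_s|\hat y_s|\,\dd s]<+\infty$ by \eqref{eq:3.4}, whence also $\E[\int_0^T u_s\rho(|\hat y_s|)\,\dd s]<+\infty$ since $\rho(x)\le A(x+1)$ and \eqref{eq:4.2} holds.

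\emph{Step 1 (upgrading the integrability of $\hat y_\cdot$).} Arguing exactly as in the first step of the proof of \cref{pro:4.3}, but with the Picard iterates $y^{n},y^{n+i}$ there replaced by $y^{2},y^{1}$, one applies Corollary 2.3 in \citet{Bri03} on the stochastic intervals $[t\wedge\sigma_m,\sigma_m]$, where $\sigma_m:=\inf\{t:\int_0^t(|z^1_s|^2+|z^2_s|^2)\,\dd s\ge m\}\wedge T$, together with (H1) for the increment in $y$ and (i) of (H4) for the $z$-dependence of $g$ at $(y^1_\cdot,z^1_\cdot)$ and at $(y^2_\cdot,z^2_\cdot)$, and lets $m\to\infty$ using the class $(D)$ property and the integrability just noted, to obtain
$$ |\hat y_t|\le G(t)+\E\Big[\int_t^T u_s\rho(|\hat y_s|)\,\dd s\,\Big|\,\F_t\Big],\qquad t\in[0,T], $$
where $G(t):=2\,\E\big[\int_0^T\gamma_s\big((g^1_s+g^2_s+|y^1_s|+|z^1_s|)^\alpha+(g^1_s+g^2_s+|y^2_s|+|z^2_s|)^\alpha\big)\,\dd s\,\big|\,\F_t\big]$. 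By the H\"older-inequality estimates of the first step of the proof of \cref{pro:4.3} together with \eqref{eq:3.1}, \eqref{eq:3.3} and $(y^j_\cdot,z^j_\cdot)\in\s^\beta\times\M^\beta$, one gets $G(\cdot)\in\s^{\beta/\alpha}$ when $\alpha\in(0,1)$ (and $G(\cdot)$ bounded when $\alpha=0$). Linearizing $\rho$ via $\rho(x)\le A(x+1)$ and applying the stochastic Gronwall-type inequality \cref{pro:2.3} --- whose hypothesis \eqref{eq:2.1} is verified precisely by \eqref{eq:3.4} --- then yields $|\hat y_t|\le e^{AM}(G(t)+AM)$, so that $\hat y_\cdot\in\s^{p}$ for some $p>1$ (take $p:=\beta/\alpha$ if $\alpha>0$, any $p>1$ if $\alpha=0$). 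Moreover $\hat g$ satisfies (A1) with $\mu_\cdot=Au_\cdot$, $\lambda_\cdot=v_\cdot$, $f_\cdot=Au_\cdot$, $\varphi_\cdot\equiv 0$, so \cref{pro:4.2} gives $\hat z_\cdot\in\M^{p}$; thus $(\hat y_\cdot,\hat z_\cdot)$ is an $L^p$ solution.

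\emph{Step 2 (vanishing of $\hat y_\cdot$).} Using (H1) for the increment in $y$ and the stochastic Lipschitz condition (H3) for the increment in $z$, one checks that $\hat g$ satisfies (A2) with $\mu_\cdot=u_\cdot$, $\lambda_\cdot=v_\cdot$, $f_\cdot\equiv 0$ and $\psi(r):=r^{(p-1)/p}\rho(r^{1/p})$; here $\psi$ is nondecreasing and concave (an elementary computation using $s\rho'(s)\le\rho(s)$ and $\rho''\le 0$), with $\psi(0)=0$ and $\int_{0^+}\frac{\dd r}{\psi(r)}=p\int_{0^+}\frac{\dd v}{\rho(v)}=+\infty$, i.e.\ $\psi\in{\bf S}$. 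Since the terminal value is $0$, \cref{pro:4.5} gives, for each $t\in[0,T]$,
$$ |\hat y_t|^p\le\E\Big[\sup_{s\in[t,T]}|\hat y_s|^p\,\Big|\,\F_t\Big]\le C_{p,M}\,\E\Big[\int_t^T u_s\,\psi(|\hat y_s|^p)\,\dd s\,\Big|\,\F_t\Big]. $$
Finally, apply the stochastic Bihari-type inequality \cref{pro:2.4} with $c=0$, $\tilde\mu_t:=|\hat y_t|^p$, $\beta_s:=C_{p,M}u_s$ and $\rho:=\psi$; its hypothesis \eqref{eq:2.2} holds because $\E[\int_0^T u_s|\hat y_s|^p\,\dd s]\le M\,\E[\sup_{s\in[0,T]}|\hat y_s|^p]<+\infty$ in view of $\hat y_\cdot\in\s^p$. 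Hence $|\hat y_t|^p\equiv 0$, so $\hat y_\cdot\equiv 0$, and then $\hat z_\cdot\equiv 0$ follows from the equation (a continuous local martingale of finite variation being constant).

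\emph{Main obstacle.} The delicate point --- and exactly where \eqref{eq:3.4} is needed --- is Step~1. Since $(\hat y_\cdot,\hat z_\cdot)$ is only known to be of class $(D)$ and to lie in $\s^\beta\times\M^\beta$ with $\beta<1$, which is strictly weaker than $\hat y_\cdot\in\s^1$, one cannot a priori control $\E[\int_0^T u_s\rho(|\hat y_s|)\,\dd s]$, i.e.\ the condition \eqref{eq:2.1} (and, in Step~2, \eqref{eq:2.2}) for $\bar\mu_\cdot=\tilde\mu_\cdot=|\hat y_\cdot|$; once \eqref{eq:3.4} is assumed, the stochastic Gronwall-type inequality \cref{pro:2.3} promotes $\hat y_\cdot$ into some $\s^{p}$ with $p>1$, after which the $L^p$ a priori estimate \cref{pro:4.5} and the stochastic Bihari-type inequality \cref{pro:2.4} close the argument. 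Additional care is also required in passing to the limit $m\to\infty$ in the localizations before invoking \cref{pro:2.3,pro:2.4}, which is again where the class $(D)$ property and \eqref{eq:3.4} come in.
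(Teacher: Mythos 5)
Your proof is correct and follows essentially the same route as the paper's: localize and apply Corollary 2.3 of Briand et al. with (H1) and (i) of (H4), bootstrap $\hat y_\cdot$ into $\s^p$ for some $p>1$ via the stochastic Gronwall inequality (with \eqref{eq:3.4} supplying hypothesis \eqref{eq:2.1}), and then conclude $\hat y_\cdot\equiv 0$ from the a priori estimate of \cref{pro:4.5} together with the stochastic Bihari inequality. The only (harmless) deviations are that you construct the concave function $\psi$ for (A2) explicitly instead of invoking \cref{pro:4.1} to pass from (H1) to (H1a)$_p$, and that you obtain $\hat z_\cdot\equiv 0$ by the finite-variation local-martingale argument rather than via the paper's estimate \eqref{114*} and the limit $m\to\infty$.
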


\begin{proof}
Assume that $(y_t,z_t)_{t\in[0,T]}$ and $(y'_t,z'_t)_{t\in[0,T]}$ are both solutions of BSDE (\ref{101}), $(y_t,z_t)_{t\in[0,T]}$ and $(y'_t,z'_t)_{t\in[0,T]}$ belong to $\s^\beta\times\M^\beta$ for some $\beta\in(\alpha,1)$, $(y_t)_{t\in[0,T]}$ and $(y'_t)_{t\in[0,T]}$ belong to class $(D)$, and
\begin{equation}\label{eq:4.43}
\E\left[\int_0^T u_t(|y_t|+|y'_t|)\dif t\right]<+\infty.
\end{equation}
It is obvious that $(\hat y_t, \hat z_t)_{t\in[0,T]}:=(y_t-y'_t, z_t-z'_t)_{t\in[0,T]} $ is an adapted solution of the following BSDE:
\begin{equation}\label{109}
\hat{y_t}=\int^T_t\hat g(s,\hat y_s,\hat z_s)\ \mathrm{d} s-\int^T_t\hat z_s\,\mathrm{d} B_s,\quad t\in[0,T],
\end{equation}
where $\as$, for each $(y,z)\in\R^k\times\R^{k\times d}$,
$$\hat g(t,y,z):=g(t,y+y'_t,z+z'_t)-g(t,y'_t,z'_t).$$
Then, in light of (H1) and (i) of (H4), by a argument similar to (\ref{33})-(\ref{107}) in the proof of the existence part of Theorem \ref{th}, it is not hard to verify that for each $t\in\T$,
$$
|\hat{y}_{t}|\leq\E\left[
	\left.\int^{T}_{t} u_s\rho(|\hat{y}_s|)\ \mathrm{d}s\right|\F_t\right]+G(t),
$$
where
$$
G(t):=2\E\left[\left.\int^{T}_{0}\gamma_s\left(g^1_s+g^2_s+
|y'_s|+|z_s|+|z'_s|\right)^\alpha\ \mathrm{d}s\right|\F_t\right]\in\s^{q}\ \  \text{with}\ \ q:=\frac{\beta}{\alpha}>1.\vspace{0.1cm}
$$
In light of \eqref{eq:4.1} and \eqref{eq:4.43}, by the stochastic Gronwall-type inequality (see \cref{pro:2.3}) we deduce that for each $t\in[0,T]$,
$$
|\hat{y}_t| \leq e^{A\|\int_0^Tu_s \dd s\|_\infty}\left(A\|\int_0^T u_s\dd s\|_\infty+G(t)\right)\leq e^{AM}\left( AM+G(t)\right),
$$
which yields that
\begin{equation}\label{108}
\hat y_\cdot=(y_\cdot-y'_\cdot)\in \s^{q}.
\end{equation}
Furthermore, by (H1) and \cref{pro:4.1} we know that the generator $g$ satisfies (H1a)$_2$ with a linearly growing function $\kappa(\cdot)\in {\bf S}$. Then, in light of (H1a)$_2$ and (H3) of the generator $g$ along with \eqref{eq:4.1}, we deduce that the generator $\hat{g}$ satisfies the assumption (A1) with
$$p:=q, \ \ \mu_\cdot:=Au_\cdot, \ \ \lambda_\cdot:=v_\cdot,\ \ f_\cdot:\equiv 0,\ \ \varphi_\cdot:=Au_\cdot.$$
It then follows from \cref{pro:4.2} that $\hat z_\cdot\in\M^q$ and then $(\hat y_t, \hat z_t)_{t\in[0,T]}$ is an $L^q$ solution of BSDE (\ref{109}). On the other hand, by (H1) and \cref{pro:4.1} we know that the generator $g$ satisfies assumption (H1a)$_q$ with a linearly growing function $\bar \kappa(\cdot)\in {\bf S}$. Then, in light of (H1a)$_q$ and (H3) along with the definition of $\hat{g}$, it can also be verified that the generator $\hat{g}$ satisfies the assumption (A2) with
$$p:=q, \ \ \mu_\cdot:=u_\cdot,\ \ \psi(\cdot):=\bar\kappa(\cdot),\ \ \lambda_\cdot:=v_\cdot,\ \ f_\cdot:\equiv 0.$$
It then follows from \cref{pro:4.5} with $u=t$ that there exists a constant $C_{q,M}>0$ depending only on $q$ and $M$ such that for each $t\in\T$,
\begin{equation}\label{112}
|\hat y_t|^q\leq\E\left[\sup_{s\in [t,T]}|\hat y_s|^q\bigg|\F_t\right]\leq C_{q,M}\E\left[\int_t^T u_s\bar\kappa\left(|\hat y_s|^q\right)\ {\rm d}s\bigg|\F_t\right].\vspace{0.1cm}
\end{equation}
In consideration of (\ref{108}) and (\ref{112}), the stochastic Bihari-type inequality (see \cref{pro:2.4}) yields that for each $t\in\T$,
\begin{equation}\label{113}
	|\hat y_t|=0.
\end{equation}
Furthermore, in light of (H1a)$_2$ and (H3) of the generator $g$ along with (\ref{115}), we can check that for each $m\geq 1$, the generator $\hat{g}$ of BSDE (\ref{109}) satisfies the assumption (A1) with
$$
p:=q, \ \ \mu_\cdot:=(m+A)u_\cdot, \ \ \lambda_\cdot:=v_\cdot, \ \ f_\cdot:\equiv 0, \ \ \varphi_\cdot:=\kappa\left(\frac{A}{m}\right)
u_\cdot.
$$
It then follows from \cref{pro:4.2} with $u=t=0$ that there exists a constant $C_q>0$ depending only on $q$ such that for each $m\geq1$,
\begin{equation}\label{114*}
\E\left[\left(\int_0^T |\hat z_t|^2\ {\rm
d}t\right)^{\frac{q}{2}}\right]\leq C_{q}[1+(m+A)M]^{\frac{q}{2}}\E\left[\sup\limits_{t\in [0,T]}|\hat y_t|^q\right]+C_{q}\E\left[\left(\int_0^T \kappa\left({A\over m}\right)u_t \dd t\right)^{\frac{q}{2}}\right].
\end{equation}
In light of (\ref{113}) and the fact that $\kappa(\cdot)$ is a continuous function with $\kappa(0)=0$, by sending $m\rightarrow\infty$ in the last inequality \eqref{114*} we obtain
$$
\begin{array}{lll}
\E\left[\left(\Dis \int_0^T |\hat z_s|^2\ {\rm
d}s\right)^{\frac{q}{2}}\right]=\E\left[\left(\Dis \int_0^T |z_s-z'_s|^2\
{\rm d}s\right)^{\frac{q}{2}}\right]=0.
\end{array}
$$
The proof is complete.
\end{proof}
\vspace{0.5cm}



\end{document}